\newcommand{\nn}{\nonumber}
\newcommand{\be}{\begin{equation}}
\newcommand{\ee}{\end{equation}}
\newcommand{\ba}{\begin{array}}
\newcommand{\ea}{\end{array}}
\newcommand{\bea}{\begin{eqnarray}}
\newcommand{\eea}{\end{eqnarray}}
\newcommand{\beas}{\begin{eqnarray*}}
\newcommand{\eeas}{\end{eqnarray*}}
\newtheorem{remark}{Remark}[section]
 \newcommand{\bx}{{\bf x} }
 \newcommand{\C}{\mathcal{C}}
 \newcommand{\R}{\mathcal{R}}
  \newcommand{\Z}{\mathcal{Z}}
 \newcommand{\cP}{\mathcal{P}}
 \newcommand{\M}{\mathcal{M}}
 \newcommand{\st}{\mathrm{s.t.\;} }
\newcommand{\iprod}[2]{\langle #1, #2\rangle}
\newcommand{\rn}[1]{\uppercase\expandafter{\romannumeral #1}}
\newcommand{\diag}{\mathrm{diag}}
\newcommand{\grad}{\mathrm{grad}\,}
\begin{document}

\title{Ground states and their characterization of spin-F
Bose-Einstein condensates \thanks{Submitted to the editors
    DATE.
\funding{The work of Z. Wen is supported in part by the NSFC grants 11421101 and 11831002, and by the National Basic Research Project under the grant 2015CB856002. The work of Y. Cai is supported in part by the NSFC grants 11771036, U1530401 and 91630204. The work of X. Wu is supported in part by the NSFC grant 91730302, and by Shanghai Science and Technology Commission Grant 17XD1400500.}}}

\author{Tonghua Tian \thanks{Yuanpei College, Peking University, CHINA (\email{fairyt@pku.edu.cn}).}
 \and 
 Yongyong Cai \thanks{Beijing Computational Science Research Center, CHINA (\email{yongyong.cai@csrc.ac.cn}).}
 \and 
 Xinming Wu \thanks{Shanghai Key Laboratory for Contemporary Applied Mathematics, School of Mathematical Sciences, Fudan University, CHINA (\email{wuxinming@fudan.edu.cn}).}
  \and Zaiwen Wen \thanks{Beijing International Center for Mathematical 
 Research, Peking University, CHINA (\email{wenzw@pku.edu.cn}).}
 }
\date{}

\headers{Ground states and their characterization of spin-F Bose-Einstein 
condensates}{T. Tian, Y. Cai, X. Wu, and Z. Wen}

\maketitle

\begin{abstract}
The computation of the ground states of spin-$F$ Bose-Einstein condensates (BECs) can 
be formulated as an energy minimization problem with two quadratic constraints. We 
discretize the energy functional and constraints using the Fourier pseudospectral schemes 
and view the discretized  problem as an optimization problem on manifold. Three different 
types of retractions to the manifold are designed. They enable us to apply various 
optimization methods on manifold to solve the problem. Specifically, an adaptive 
regularized Newton method is used together with a cascadic multigrid technique to 
accelerate the convergence. 
According to our limited knowledege, our method is the first applicable algorithm for BECs with an arbitrary integer spin, including the complicated 
spin-3 BECs.
Extensive numerical results on ground 
states of spin-1, spin-2 and spin-3 BECs with diverse interaction and optical lattice 
potential in one/two/three dimensions are reported to show the efficiency of our method 
and to demonstrate some interesting physical phenomena.
\end{abstract}

\begin{keywords} Gross-Pitaevskii theory,  spinor condensates, spin-2
 ground state, spin-3 ground state, energy minimization
\end{keywords}

\begin{AMS}35Q55, 35A01, 81Q99
\end{AMS}

\section{Introduction}
\label{sect:intro}
\setcounter{equation}{0}

Bose-Einstein condensate (BEC), first predicted by A. Einstein based on S. N. Bose's 
work, refers to the state of matter in which part of the bosons occupy the same quantum 
state at extremely low temperature. The earliest experimental observations of BEC were 
announced in 1995 \cite{Obse1,Obse2,Obse3} and have attracted numerous researchers 
into the study of condensates of dilute gases ever since 
\cite{Anderson1,Dalfovo1,Fetter1,Leggett1,Morsch1,Ozeri1,Posa1}.
While in early experiments the spin degrees of freedom are frozen due to the magnetic trapping, the experimental 
realizations of spin-1 and spin-2 condensates have been achieved later by optical confinements
\cite{Barrett,Gorl,Mies,Stam1,Sten} and revealed 
various exciting phenomena absent in single-component condensates.

Numerous theoretical studies of spinor condensates have been carried out after the 
experimental achievement \cite{Ho,Law,Ohmi,Stam2}.
At zero temperature, a spin-$F$ ($F=1,2,\ldots$) BEC is 
described by a $2F+1$ vector wave function
$\Phi=(\phi_{F},\cdots,\phi_{-F})^T\in \C^{2F+1}$
 and a generalized coupled Gross-Pitaevskii 
equation (GPE). Three important invariants of it are the mass of the wave function, 
the magnetization and the energy per particle. A fundamental problem in BEC is to find the 
condensate stationary states, which is obtained by minimizing the Gross-Pitaevskii 
(GP) energy functional subject to the conservation of total mass and magnetization.

Different numerical methods have been proposed in the literature to compute the ground 
state of a spin-1 BEC \cite{spin1-1, spin1-3, spin1-4, spin1-5, Bao1}. Among them, a very 
popular method is the imaginary time method combined with a proper discretization scheme 
to evolve the resulted gradient flow equation under the normalization of the wave function 
\cite{it-1, Bao-Du-2004, Bao1, spin1-1}. To apply the normalized gradient flow method to compute the 
ground state of a spin-$F$ BEC, $2F+1$ projection constants have to be determined in the 
normalization step, while only two normalization conditions (i.e., the two constraints) are 
given. In the literature, this method is applied to compute the ground state of a spin-1 BEC 
through the introduction of a random variable \cite{spin1-4, spin1-5} or a third normalization 
condition \cite{Bao1}. Recently, a projection gradient method \cite{spin1-1, spin2-1} has 
been proposed to compute ground states of spin-1 and spin-2 BEC, where a continuous 
normalized gradient flow (CNGF) was discretized by the Crank-Nicolson finite difference 
(CNFD) method with a proper and very special way to deal with the nonlinear terms. 
This scheme is proved to be mass- and magnetization-conservative and 
energy-diminishing in the discretized level. However, a fully nonlinear coupled system has to be solved at each time step.

Most of the existing numerical methods for computing the ground states of spinor BEC 
evolve from the gradient flow method, and thus converge at most linearly and/or require 
to solve a large scale linear system per iteration, which leads to quite expensive 
computational cost. Most of them are specially designed for spin-1 or spin-2 
BEC, but the spin-3 cases are rarely discussed. 
Meanwhile, over the last decade, some advanced optimization methods 
have been developed for solving minimization problems on matrix manifolds, such as 
the Riemannian Newton methods and trust-region methods \cite{Absil, AbsilRTR} with 
superlinear or quadratic convergence rate. The aim of this paper is to explore a new way
to compute the ground states of spinor BEC, and propose 
an efficient regularized Newton method for the general spin-$F$ cases. We first 
discretize the energy functional and the constraints with the Fourier pseudospectral 
schemes and thus approximate the original infinite dimensional problem by a finite 
dimensional minimization problem, of which the feasible region can be proven to be 
a Riemannian manifold. We give the formulas of Riemannian gradient and 
Hessian on this manifold, and then aim to apply an adaptive regularized Newton 
method to solve the Riemannian optimization problem. To improve the efficiency and 
stability, we adopt the cascadic multigrid technique and use a Riemannian gradient 
method with Barzilai-Borwein step size to compute initial points on each mesh.
Three different retractions on the manifold are proposed for the implementation of 
Riemannian optimization algorithms. The first one is the classical projective retraction, 
and the second one comes from the normalized gradient flow \cite{Bao1}. The computation 
of them relies on finding a unique zero of a single-variable function, which can be done 
quite efficiently and accurately. The third retraction is proposed as an approximation of 
the first one, with a very brief closed-form formula.
Extensive numerical experiments demonstrate that our approach can quickly compute 
an accurate approximation of the ground state, and is more stable than the classical 
Riemannian trust-region method. 
The algorithm remains effective even for the complicated 
spin-3 BEC in 3D with an optical lattice potential, for which there exists no 
applicable algorithm before.

The rest of this paper is organized as follows. Specific problem statements of spin-1, spin-2 and spin-3 BEC are given in \cref{sect:prob}. Discretizations of the energy functional and the constraints via the Fourier pseudospectral 
schemes are introduced in \cref{sect:discre}. In \cref{sect:mani}, we give some 
preliminaries on Riemannian optimization, and 
investigate the manifold structure of the feasible region. In \cref{sect:arnt}, 
we present a modified version of the adaptive regularized Newton method for 
solving the discretized optimization problem. The three retractions are 
described in \cref{sect:retr}, and detailed numerical results are reported in 
\cref{sect:result} to illustrate the efficiency and 
accuracy of our algorithm. Finally, some conclusions are given in 
\cref{sect:conclusion}.

\section{Problem Statement}
\label{sect:prob}

The specific formulation of the minimization problem for computing the ground states of  
spin-1, spin-2 and spin-3 BEC is stated as follows:

{\bf Spin-1}. For a spin-1 BEC, the GP energy functional for the spin-1 wave function 
is given by
\begin{align}
E(\Phi(\cdot))&=\int_{\R^d
}\bigg\{\sum_{l=-1}^{1} \left(\frac{1}{2}|\nabla
\phi_l|^2+(V(\bx)-pl+ql^2)|\phi_l|^2\right)
+\frac{\beta_0}{2}|\Phi|^4+\frac{\beta_1}{2}|\mathbf{F}|^2\bigg\}\; d\bx,\label{eq:energy:spin1}
\end{align}
where $\bx=x$ in 1D, $\bx=(x,y)^T$ in 2D and $\bx=(x,y,z)^T$ in 3D, $V(\bx)$ is the external confining potential, $p$ and $q$ are the linear and quadratic Zeeman energy shifts, respectively.
$\beta_0$ is the density dependent interaction strength between the particles and $\beta_1$ is the spin dependent interaction strength,
$\mathbf{F}:=\mathbf{F}(\Phi)=(F_x,F_y,F_z)^T\in\R^3$ is the spin vector given by
\be\label{eq:F}
F_x=\Phi^*\mathrm{f}_x\Phi,\quad F_y=\Phi^*\mathrm{f}_y\Phi,\quad F_z=\Phi^*\mathrm{f}_z\Phi,
\ee
where $\Phi^*=\overline{\Phi}^T$ is the conjugate transpose and $\mathrm{f}_\alpha$ ($\alpha=x,y,z$) are the 3-by-3 spin-1 matrices
\be\label{eq:spin1m}
\mathrm{f}_x=\frac{1}{\sqrt{2}}\begin{pmatrix}0&1&0\\
1&0&1\\
0&1&0\end{pmatrix},\quad \mathrm{f}_y=\frac{i}{\sqrt{2}}\begin{pmatrix}0&-1&0\\
1&0&-1\\
0&1&0\end{pmatrix},\quad\mathrm{f}_z=\begin{pmatrix}1&0&0\\
0&0&0\\
0&0&-1\end{pmatrix},
\ee
and $i=\sqrt{-1}$ is the imaginary unit.
In detail, the components of spin vector $\mathbf{F}$ can be written explicitly as:  $F_z=|\phi_1|^2-|\phi_{-1}|^2$,
\be\label{eq:spin1v}
\begin{split}
&F_x=\frac{1}{\sqrt{2}}\left[\overline{\phi}_1\phi_0+\overline{\phi}_0
(\phi_1+\phi_{-1})+\overline{\phi}_{-1}\phi_0\right],\quad F_y=\frac{i}{\sqrt{2}}\left[-\overline{\phi}_1\phi_0+\overline{\phi}_0
(\phi_1-\phi_{-1})+\overline{\phi}_{-1}
\phi_0\right].
\end{split}
\ee

{\bf Spin-2}. For a spin-2 BEC, 
the
GP energy is given by
\begin{align}
E(\Phi(\cdot))&=\int_{\R^d
}\bigg\{\sum_{l=-2}^{2} \left(\frac{1}{2}|\nabla
\phi_l|^2+(V(\bx)-pl+ql^2)|\phi_l|^2\right)
+\frac{\beta_0}{2}|\Phi|^4+\frac{\beta_1}{2}|\mathbf{F}|^2
+\frac{\beta_2}{2}|A_{00}|^2\bigg\}\; d\bx,\label{eq:energy:spin2}
\end{align}
where $\beta_2$ is the spin-singlet interaction strength and all the other parameters $p,q,\beta_0,\beta_1$ are the same as those in the spin-1 case,
$\mathbf{F}:=\mathbf{F}(\Phi)=(F_x,F_y,F_z)^T\in\R^3$ is the spin vector defined by \cref{eq:F},
with  $\mathrm{f}_\alpha$ ($\alpha=x,y,z$) given by the 5-by-5 spin-2 matrices
\be
\mathrm{f}_x=\begin{pmatrix}0&1&0&0&0\\
1&0&\sqrt{\frac{3}{2}}&0&0\\
0&\sqrt{\frac{3}{2}}&0&\sqrt{\frac{3}{2}}&0\\
0&0&\sqrt{\frac{3}{2}}&0&1\\
0&0&0&1&0\end{pmatrix},\quad \mathrm{f}_y=i\begin{pmatrix}0&-1&0&0&0\\
1&0&-\sqrt{\frac{3}{2}}&0&0\\
0&\sqrt{\frac{3}{2}}&0&-\sqrt{\frac{3}{2}}&0\\
0&0&\sqrt{\frac{3}{2}}&0&-1\\
0&0&0&1&0\end{pmatrix}
\ee
and
\be
\mathrm{f}_z=\text{diag}(2,1,0,-1,-2).
\ee
Therefore, the spin vector $\mathbf{F}$ can be written explicitly
\be\label{eq:F:spin2}\begin{split}
&F_x=\overline{\phi}_2\phi_1+\overline{\phi}_1\phi_2+\overline{\phi}_{-1}\phi_{-2}+\overline{\phi}_{-2}\phi_{-1}+\frac{\sqrt{6}}{2}(\overline{\phi}_1\phi_0+
\overline{\phi}_0\phi_1+\overline{\phi}_0\phi_{-1}+\overline{\phi}_{-1}\phi_0),\\
&F_y=i\left[\overline{\phi}_1\phi_2-\overline{\phi}_2\phi_1+\overline{\phi}_{-2}\phi_{-1}-\overline{\phi}_{-1}\phi_{-2}+\frac{\sqrt{6}}{2}(
\overline{\phi}_0\phi_1-\overline{\phi}_1\phi_0+\overline{\phi}_{-1}\phi_0-\overline{\phi}_0\phi_{-1})\right],\\
&F_z=2|\phi_2|^2+|\phi_1|^2-|\phi_{-1}|^2-2|\phi_{-2}|^2.
\end{split}
\ee
Define the matrix
\be
\mathbf{A}=\frac{1}{\sqrt{5}}\begin{pmatrix}0&0&0&0&1\\
0&0&0&-1&0\\
0&0&1&0&0\\
0&-1&0&0&0\\
1&0&0&0&0\end{pmatrix},
\ee
then $A_{00}:=A_{00}(\Phi)=\Phi^T \mathbf{A}\Phi$ 
can be expressed as
\be\label{eq:A:spin2}
A_{00}=\frac{1}{\sqrt{5}}(2\phi_2\phi_{-2}-2\phi_{1}\phi_{-1}+\phi_0^2).
\ee

{\bf Spin-3}.
For a spin-3 BEC, 
the GP energy is given by
\begin{align}
E(\Phi(\cdot))&=\int_{\R^d
}\bigg\{\sum_{l=-3}^{3} \left(\frac{1}{2}|\nabla
\phi_l|^2+(V(\bx)-pl+ql^2)|\phi_l|^2\right)
+\frac{\beta_0}{2}|\Phi|^4+\frac{\beta_1}{2}|\mathbf{F}|^2
+\frac{\beta_2}{2}|A_{00}|^2\nonumber\\
&\qquad\qquad+\frac{\beta_3}{2}\sum_{l=-2}^2|A_{2l}|^2\bigg\}\; d\bx,\label{eq:energy:spin3}
\end{align}
where $\beta_3$ is the spin-quintet interaction strength, and all the other parameters $p,q,\beta_0,\beta_1,\beta_2$ are the same as those in the spin-1,2 cases.
$\mathbf{F}:=\mathbf{F}(\Phi)=(F_x,F_y,F_z)^T\in\R^3$ is the spin vector defined by \eqref{eq:F},
with  $\mathrm{f}_\alpha$ ($\alpha=x,y,z$) given by the 7-by-7 spin-3 matrices
\be
\mathrm{f}_x=\begin{pmatrix}0&\sqrt{3/2}&0&0&0&0&0\\
\sqrt{3/2}&0&\sqrt{5/2}&0&0&0&0\\
0&\sqrt{5/2}&0&\sqrt{3}&0&0&0\\
0&0&\sqrt{3}&0&\sqrt{3}&0&0\\
0&0&0&\sqrt{3}&0&\sqrt{5/2}&0\\
0&0&0&0&\sqrt{5/2}&0&\sqrt{3/2}\\
0&0&0&0&0&\sqrt{3/2}&0\end{pmatrix},
\ee
\be
\mathrm{f}_y=i\begin{pmatrix}0&-\sqrt{3/2}&0&0&0&0&0\\
\sqrt{3/2}&0&-\sqrt{5/2}&0&0&0&0\\
0&\sqrt{5/2}&0&-\sqrt{3}&0&0&0\\
0&0&\sqrt{3}&0&-\sqrt{3}&0&0\\
0&0&0&\sqrt{3}&0&-\sqrt{5/2}&0\\
0&0&0&0&\sqrt{5/2}&0&-\sqrt{3/2}\\
0&0&0&0&0&\sqrt{3/2}&0\end{pmatrix}
\ee
and
\be
\mathrm{f}_z=\text{diag}(3,2,1,0,-1,-2,-3).
\ee
The spin vector $\mathbf{F}$ can  be written explicitly
\begin{equation*}\begin{split}
&F_+=F_x+iF_y=\sqrt{6}\overline{\psi}_3\psi_2+\sqrt{10}\overline{\psi}_2\psi_1+2\sqrt{3}\overline{\psi}_1\psi_0+2\sqrt{3}\overline{\psi}_0\psi_{-1}+
\sqrt{10}\overline{\psi}_{-1}\psi_{-2}+\sqrt{6}\overline{\psi}_{-2}\psi_{-3},\\
&F_z=3|\psi_3|^2+2|\psi_2|^2+|\psi_1|^2-|\psi_{-1}|^2-2|\psi_{-2}|^2-3|\psi_{-3}|^2.
\end{split}
\end{equation*}
Define the matrices
\begin{equation*}
\mathbf{A}=\frac{1}{\sqrt{7}}\begin{pmatrix}
0&0&0&0&0&0&1\\
0&0&0&0&0&-1&0\\
0&0&0&0&1&0&0\\
0&0&0&-1&0&0&0\\
0&0&1&0&0&0&0\\
0&-1&0&0&0&0&0\\
1&0&0&0&0&0&0\end{pmatrix},\ \ 
\mathbf{A}_0=\frac{1}{\sqrt{7}}\begin{pmatrix}
0&0&0&0&0&0&\frac{5}{2\sqrt{3}}\\
0&0&0&0&0&0&0\\
0&0&0&0&\frac{-\sqrt{3}}{2}&0&0\\
0&0&0&\sqrt{\frac{2}{3}}&0&0&0\\
0&0&\frac{-\sqrt{3}}{2}&0&0&0&0\\
0&0&0&0&0&0&0\\
\frac{5}{2\sqrt{3}}&0&0&0&0&0&0\end{pmatrix}
\end{equation*}
and $\mathbf{A}_{l}=(a_{l,jk})_{7\times7}$ ($l=\pm1,\pm2$), where $a_{l,jk}$ is zero except for those $j+k=8-l$. 
For the simplicity of notations, we denote $\vec{a}_l=(a_{l,1(7-l)},
a_{l,2(6-l)},\ldots,a_{l,(7-l)1})^T\in\R^{7-l}$ for $l=1,2$ and $\vec{a}_l=(a_{l,(1-l)7},
a_{l,(2-l)6}, \ldots,a_{l,7(1-l)})^T\in\R^{7+l}$ for $l=-1,-2$ with
\begin{align*}
&\vec{a}_{\pm1}=\frac{1}{\sqrt{7}}\left(\frac{5}{2\sqrt{3}},-\frac{\sqrt{5}}{2},\frac{1}{\sqrt{6}},\frac{1}{\sqrt{6}},-\frac{\sqrt{5}}{2},\frac{5}{2\sqrt{3}}\right)^T,\quad
\vec{a}_{\pm2}=\frac{1}{\sqrt{7}}\left(\sqrt{\frac{5}{6}},-\sqrt{\frac{5}{3}},\sqrt{2},-\sqrt{\frac{5}{3}},\sqrt{\frac{5}{6}}\right)^T.
\end{align*}
Then $A_{00}:=A_{00}(\Psi)=\Psi^T \mathbf{A}\Psi$ and $A_{2l}:=A_{2l}(\Psi)=\Psi^T\mathbf{A}_l\Psi$ can be expressed as
\begin{align}\label{eq:A:spin3}
A_{00}=&\frac{1}{\sqrt{7}}(2\psi_3\psi_{-3}-2\psi_{2}\psi_{-2}+2\psi_1\psi_{-1}-\psi_0^2),\\ A_{20}=&\frac{1}{\sqrt{21}}(5\psi_3\psi_{-3}-3\psi_1\psi_{-1}+\sqrt{2}\psi_0^2),\\
A_{2,\pm1}=&\frac{1}{\sqrt{21}}(5\psi_{\pm3}\psi_{\mp2}-\sqrt{15}\psi_{\pm2}\psi_{\mp1}+\sqrt{2}\psi_{\pm1}\psi_0),\\A_{2,\pm2}=&\frac{1}{\sqrt{21}}(\sqrt{10}\psi_{\pm3}\psi_{\mp1}-\sqrt{20}\psi_{\pm2}\psi_{0}+\sqrt{2}\psi_{\pm1}^2).
\end{align}

For computing the ground state of a spin-$F$ BEC, the energy functional $E(\Phi(\cdot))$ is usually subject to the following two constraints, i.e. the {\sl mass} (or {\sl normalization}) as
\be\label{eq:norm:spin3}
N(\Phi(\cdot)):=\|\Phi(\cdot)\|^2=\int_{\Bbb R^d}\sum_{l=-F}^F|\phi_l(\bx)|^2\,d\bx=1,
\ee
 and the {\sl magnetization} (with $M\in[-F,F]$) as
\be\label{eq:mag:spin3}
M(\Phi(\cdot)):=\int_{\Bbb R^d}\sum_{l=-F}^Fl|\phi_l(\bx)|^2\,d\bx
=M.
\ee
The ground state $\Phi_g(\bx)$ is obtained from the minimization of
the energy functional subject to the conservation of total mass
and magnetization:
\begin{quote}
  Find $\left(\Phi_g \in S_M\right)$ such that
\end{quote}
  \begin{equation}\label{eq:minimize:spin3}
    E_g := E\left(\Phi_g\right) = \min_{\Phi \in S_M}
    E\left(\Phi\right),
  \end{equation}
\noindent where the nonconvex set $S_M$ is defined as
\be \label{eq:cons:spin3}
S_M=\left\{\Phi=(\phi_F,\ldots,\phi_{-F})^T\in\C^{2F+1}\ |\ \|\Phi\|=1, \
\int_{{\Bbb R}^d} \sum_{l=-F}^Fl | \phi_l(\bx) |^2=M, \ E(\Phi)<\infty\right\}. \ee

For $M=\pm F$ in the spin-$F$ BEC, the constraints ensure only one component $\phi_{\pm F}$ is nonzero, and \eqref{eq:minimize:spin3} reduces to the single component BEC ground state problems which have been considered.   Therefore, we will assume $|M|<F$ for the spin-$F$ BEC ground states in the rest part of the paper.

\subsection{Notoations}

Given $X\in \C^{m\times n}$,   $\bar X, X^T, X^*$ and  $\Re(X)$ denote the complex conjugate, the transpose, the complex conjugate transpose, and the real part of $X$, respectively. 
The trace of $X$, i.e., the sum of the diagonal elements of $X\in\C^{n\times n}$, is denoted by $\mathrm{tr}(X)$. For a given vector $d\in\C^n$, the operator $\diag(d)$ returns a square matrix in $\C^{n\times n}$ with the elements of $d$ on the main diagonal, while $\diag(X)$ gives a column vector in $\C^n$ consisting of the main diagonal of $X$. The Euclidean inner product between two matrices $X,\, Y\in \C^{m\times n}$ is defined as $\iprod{X}{Y}:=\sum_{jk} X_{jk}\bar{Y}_{jk}=\mathrm{tr}(Y^*X)$.

\section{Discretization Schemes}
\label{sect:discre}

In this section, we introduce discretization of the energy functional \eqref{eq:energy:spin3} and constraints \eqref{eq:norm:spin3}- \eqref{eq:mag:spin3} in the constrained minimization problem \eqref{eq:minimize:spin3} for the spin-3 case. It is similar and much easier to deal with the spin-1 and spin-2 cases.
Due to the external trapping potential, the ground state of \eqref{eq:minimize:spin3} decays exponentially as $|\mathbf{x}|\to \infty$.  Thus we can truncate the energy functional and constraints from the whole space $\R^d$ to a bounded computational domain $U$ which is chosen large enough such that the truncation error is negligilbe with periodic boundary condition. Then we approximate spatial derivatives via the Fourier pseudospectral (FP) method and the integrals via the composite trapezoidal quadrature. For simplicity of notation, we only present the FP discretization in 1D. Extensions to 2D and 3D are straightforward for tensor grids and the details are omitted here for brevity.

For $d=1$, we take a bounded interval $U=(a,b)$. Let $h=(b-a)/n$ be the spatial mesh size with $n$ an even positive integer and denote $x_j=a+jh$ for $j=0,1,\cdots, n$.
Let $\phi_{jl}$ be the numerical approximation of $\phi_l(x_j)$  for $j=0,1,\cdots,n$ and $l=3,\cdots,-3$ 
satisfying $\phi_{0l}=\phi_{nl}$ and denote 
$X=( \sqrt{h}\phi_{jl} )\in\C^{n\times 7}$ ($j=0,1,\cdots, n-1$, $l=3,\cdots,-3$).

\begin{eqnarray}
E(\Phi) 
&=&\sum_{j=0}^{n-1}\int_{x_j}^{x_{j+1}} \bigg\{\sum_{l=-3}^{3} \left(-\frac{1}{2} \bar\phi_l \;\partial_{xx}\phi_l +(V(x)-pl+ql^2)|\phi_l|^2\right) \nn \\
&& \qquad\qquad\qquad +\frac{\beta_0}{2}|\Phi|^4+\frac{\beta_1}{2}|\mathbf{F}|^2 +\frac{\beta_2}{2}|A_{00}|^2 
+ \frac{\beta_3}{2}\sum_{l=-2}^{2} |A_{2l}|^2\bigg\}\; dx  \\
&\approx& h \sum_{j=0}^{n-1}  \bigg\{\sum_{l=-3}^{3} \left(-\frac12\bar\phi_{l}(x_j) \;\partial_{xx}^f\phi_l\Big|_{j} +(V(x_j)-pl+ql^2)|\phi_{l}(x_j)|^2\right)\nn \\
&& \qquad\qquad\qquad +\frac{\beta_0}{2}|\Phi(x_j)|^4+\frac{\beta_1}{2}|\mathbf{F}(x_j)|^2+\frac{\beta_2}{2}|A_{00}(x_j)|^2 
+ \frac{\beta_3}{2}\sum_{l=-2}^{2} |A_{2l}(x_j)|^2\bigg\}\label{eq:discrete:energy0}
\end{eqnarray}
where the Fourier pseudospectral differential operator is given as
\begin{equation}\label{eq:discrete:laplace}
\partial_{xx}^f\phi\Big|_{j} = -\frac1n \sum_{p=-n/2}^{n/2-1}\lambda_p^2\tilde\phi_{pl} e^{i\frac{2\pi jp}{n}},
\end{equation}
with
\begin{equation}\label{eq:discrete:fourier}
\tilde\phi_{pl} = \sum_{j=0}^{n-1}\phi_{l}(x_j) e^{-i\frac{2\pi jp}{n}},\quad \lambda_p=\frac{2\pi p}{b-a},\quad p=-\frac n2,\cdots,\frac n2-1.
\end{equation}
Introduce $\mathrm{V}=\diag(V(x_0), \cdots, V(x_{n-1}))$,  $B=\diag(b)$ with $b=(b_3,b_2,\ldots,b_{-3})^T$ ($b_l=-pl + ql^2$, $l=3,\cdots,-3$), $\Lambda=\diag(\lambda_{-\frac n2}^2, \cdots, \lambda_{ \frac n2-1}^2)$,
 and $C=(c_{jp})\in\C^{n\times n}$ with entries $c_{jp}=e^{-i\frac{2\pi jp}{n}}$ for $j=0,\cdots,n-1$ and $p=-\frac n2,\cdots,\frac n2 -1$.
Plugging \eqref{eq:discrete:laplace} and \eqref{eq:discrete:fourier} into \eqref{eq:discrete:energy0}, and replacing $\phi_l(x_j)$ by $\phi_{jl}$, we get 
the finite dimensional approximation to the energy functional defined as
\begin{eqnarray}
E_h(X) &=& \frac{1}{2}\mathrm{tr}(X^* L X) + \mathrm{tr}(X^* \mathrm{V} X) + \mathrm{tr}(X B X^*) \nn\\
&&  + \frac{\beta_0}{2h}\rho^T \rho + \frac{\beta_1}{2h}\sum_{\alpha=x,y,z} 
\mathrm{F}_\alpha^T \mathrm{F}_\alpha + \frac{\beta_2}{2h}\mathrm{A}_{00}^* 
\mathrm{A}_{00} + \frac{\beta_3}{2h}\sum_{l=-2}^{2} \mathrm{A}_{2l}^* \mathrm{A}_{2l},\label{eq:discrete:energy}
\end{eqnarray}
where $L=C^*\Lambda C$ is the matrix representation of the discrete negative Laplace operator and
\begin{eqnarray}
&&\rho=\mathrm{diag}(X X^*), \quad\ \mathrm{F}_\alpha = \mathrm{diag}(X f_\alpha^T X^*),\ \alpha=x,y,z,\\
&&\mathrm{A}_{00}=\mathrm{diag}(X \mathbf{A} X^T),\quad \mathrm{A}_{2l} = \diag(X \mathbf{A}_l X^T),\ l=-2,\cdots,2
\end{eqnarray}
are column vectors. In fact, the first term in \eqref{eq:discrete:energy} can be computed efficiently at cost $O(n\ln n)$ through the discretized Fourier 
transform (DFT).

Similarly, let $D = \diag(3,\cdots,-3)$, the constraints \eqref{eq:norm:spin3}- \eqref{eq:mag:spin3} can be truncated and discretized as 
\begin{eqnarray}
&& N(\Phi(\cdot))  
 \approx h \sum_{j=0}^{n-1}\sum_{l=-3}^{3} |\phi_{jl}|^2  = \mathrm{tr}(X^* X) = 1,\\
&& M(\Phi(\cdot)) 
 \approx h \sum_{j=0}^{n-1}\sum_{l=-3}^{3} l |\phi_{jl}|^2 = \mathrm{tr}(X^* X D) = M,
\end{eqnarray}
which immediately implies that the set $S_M$ can be discretized as
\begin{equation}
S_h=\{ X\in \C^{n\times 7}\ | \ \mathrm{tr}(X^* X) = 1,\ \mathrm{tr}(X^* X D) = M, \ E_h(X) < \infty \}.
\end{equation}
Hence, the original problem \eqref{eq:minimize:spin3} with $d=1$ can be approximated by the discretized minimization problem via the FP
discretization:
\begin{equation}\label{eq:minimize:discrete}
E_{h,g} := E_h(X_g) = \min_{X\in S_h} E_h(X).
\end{equation}

To solve the discrete minimization problem \eqref{eq:minimize:discrete}, it is often necessary to compute the gradient and  Hessian matrix of the discrete energy $E_h(X)$.
The second-order Taylor expansion of $E_h(X)$ can be expressed as
\begin{eqnarray}
E_h(X+\Delta X) = E_h(X) + \Re\iprod{\nabla E_h}{\Delta X} + \frac12\Re\iprod{(\nabla^2 E_h) \Delta X}{\Delta X} + \mathrm{h.o.t.},
\end{eqnarray}
where h.o.t. is short for  the higher-order terms. By  a simple calculation, we can get the gradient 
\begin{eqnarray}
\nabla E_h(X) &=&  LX + 2\mathrm{V}X + 2XB \nn \\ 
&&  + \frac{2\beta_0}{h}\diag(\rho)X + \frac{2\beta_1}{h}\sum_{\alpha=x,y,z}\diag(\mathrm{F}_\alpha)Xf_\alpha^T  \nn \\
&&  + \frac{2\beta_2}{h}\diag(\mathrm{A}_{00})\bar X\mathbf{A}
+ \frac{2\beta_3}{h}\sum_{l=-2}^{2}\diag(\mathrm{A}_{2l})\bar X \mathbf{A}_l,
\end{eqnarray}
and the Hessian-vector product 
\begin{eqnarray}
\nabla^2E_h(X) [Z] &=&  LZ + 2\mathrm{V}Z + 2ZB  \nn \\
&& + \frac{2\beta_0}{h}\left( \, \diag(\rho)Z + 2\diag(\Re(ZX^*))X \, \right) \nn \\
&& + \frac{2\beta_1}{h}\sum_{\alpha=x,y,z}\left(\diag(\mathrm{F}_\alpha)Zf_\alpha^T + 2\diag(\Re(Zf_\alpha^T X^* ))Xf_\alpha^T\right) \nn \\
&& + \frac{2\beta_2}{h}\left(\, \diag(\mathrm{A}_{00})\bar Z\mathbf{A} + 2\diag(Z
\mathbf{A}X^T)\bar X \mathbf{A} \,  \right)\nn \\
&& + \frac{2\beta_3}{h}\sum_{l=-2}^{2}\left(\, \diag(\mathrm{A}_{2l})\bar Z \mathbf{A}_l + 2\diag(Z \mathbf{A}_l X^T)\bar X \mathbf{A}_l \,  \right).
\end{eqnarray} 

\section{Manifold Structure}
\label{sect:mani}

In the ground state of a spin-$F$ BEC, we have $M \leftrightarrow -M \Longleftrightarrow 
\phi_l \leftrightarrow \phi_{-l}. $ Thus we only discuss the cases where $M\geq 0$. 
Express $X$ as $X = X_r + i X_i$, where $X_r, X_i\in \R^{n\times (2 F+1)}$.
Let $(X_r; X_i) = (u_F, u_{F-1}, \cdots, u_0, \cdots, u_{-F+1}, u_{-F})\in\R^{2n\times(2F+1)}$ and $u\in\R^N$ be the reconstructed column vector of this matrix, where $N = 2n (2F+1)$.
Introduce 
\be
\Gamma=\diag (F I_{2n}, ..., I_{2n}, 0, -I_{2n}, ..., -F I_{2n})\in \R^{N \times N},\ee 
where $I_{2n}$ denotes the identity matrix of size $2n$.
Then the constraints can be discretized as 
\be \label{dcon}\M =\left\{ u\in\R^N  \mid u^T u =1, u^T \Gamma u = M
\right\},\ee
in which $0 \leq M < F.$
Define $\widetilde{E}(u) := E_h (X)$, our model problem can be formulated as
\be \label{dprob} \min \quad \widetilde{E}(u), \quad \st  u \in \M. \ee

This is a nonconvex optimization problem with constraints.
Observe that $\M$ is a level set of the function
 \be \textbf{G}(u) = \frac{1}{2}(u^T u - 1, u^T \Gamma u - M)^T.\ee
When $M \notin \Z$, $\nabla \mathbf{G}(u) = (u, \Gamma u)^T$ has full rank at every point
$u \in \M$, thus according to Proposition 3.3.3  in \cite{Absil}, $\M$ is a closed embedded 
submanifold of $\R^N$ of dimension $N-2$.
In the following discussion, we 
will let  $u$ be an arbitrary point on the manifold $\M$ and  assume $f$ is a
smooth real-valued function in a neighborhood of $u$.

Given a curve $\gamma(t):\R \rightarrow \M\subset\R^N$ through $u$ at $t=0$, the associated tangent 
vector $\dot{\gamma}(0)$ can be represented by $\gamma'(0)$ in the way that 
\be \label{eq:tvec} \dot{\gamma}(0)f = \nabla f(u)^T\gamma'(0). \ee
Since $\M$ is a level set of the constant-rank function \textbf{G}, the tangent space 
$T_u\M$ reads
\be \label{eq:tspace} T_u\M = \mathrm{ker}(\nabla \mathbf{G}(u)) 
= \{\xi \in\R^N\mid u^T \xi = 0, u^T \Gamma \xi = 0 \}. \ee
We naturally define the inner product$  $ $\langle \cdot, \cdot \rangle_u$ and the norm 
$\| \cdot \|_u$ on $T_u\M$ as
\be \label{eq:prod}
\langle \xi, \zeta \rangle_u := \xi^T \zeta, \quad \| \xi \|_u := \sqrt{\xi^T \xi}, 
~~~ \xi, \zeta \in T_u\M.
\ee 
Under such a metric, 
the Riemannian gradient grad$f(u)$, defined as the unique element of $T_u\M$ satisfying
\be 
\langle \mathrm{grad}f(u), \xi \rangle_u = \xi f=\frac{d}{dt}f(u+t\xi)\big|_{t=0}, ~~~\forall\, \xi \in T_u\M,
\ee
can be written as 
\be \label{eq:grad2} \mathrm{grad}f(u) = \mathrm{P}_u\nabla f(u), \ee
where $\mathrm{P}_u$ denotes the orthogonal projection from $\R^N$ onto $T_u\M$.
From \eqref{eq:tspace} we can easily derive the formula of $\mathrm{P}_u$:

\begin{lemma}[$\mathrm{P}_u$] \label{lemma:px}
For an arbitrary point $w\in \R^N$, the orthogonal projection of it onto $T_u\M$ reads 
\be \label{Pz0} \mathrm{P}_u w = w - \frac{u^T \Gamma ^2 u \cdot u^T w - M u^T \Gamma w}{u^T \Gamma^2 u - M^2}u 
+ \frac{M u^T w - u^T \Gamma w}{u^T \Gamma^2 u - M^2}\Gamma u . \ee
\end{lemma}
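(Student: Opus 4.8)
The plan is to exploit the characterization \eqref{eq:tspace}, which shows that $T_u\M$ is precisely the orthogonal complement, in the Euclidean inner product, of the subspace $V:=\mathrm{span}\{u,\Gamma u\}$. Hence the orthogonal projector $\mathrm{P}_u$ onto $T_u\M$ equals $I-P_V$, where $P_V$ is the orthogonal projector onto $V$; equivalently, $\mathrm{P}_u w$ is the unique vector with $\mathrm{P}_u w\in T_u\M$ and $w-\mathrm{P}_u w\in V$. I would therefore start from the ansatz
\be
\mathrm{P}_u w = w - a\,u - b\,\Gamma u,
\ee
which automatically places the residual $w-\mathrm{P}_u w$ in $V$, and then determine the scalars $a,b\in\R$ so that the two membership conditions $u^T(\mathrm{P}_u w)=0$ and $u^T\Gamma(\mathrm{P}_u w)=0$ of \eqref{eq:tspace} are met.

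Substituting the ansatz into these two conditions and using the symmetry $\Gamma^T=\Gamma$ together with the facts $u^Tu=1$ and $u^T\Gamma u=M$, which hold because $u\in\M$, the conditions collapse to the $2\times2$ linear system
\be
\begin{pmatrix} 1 & M \\ M & u^T\Gamma^2 u \end{pmatrix}\begin{pmatrix} a \\ b \end{pmatrix}=\begin{pmatrix} u^T w \\ u^T\Gamma w \end{pmatrix}.
\ee
Its coefficient matrix is exactly the Gram matrix of the pair $\{u,\Gamma u\}$, with determinant $u^T\Gamma^2 u-M^2$. Solving this system by Cramer's rule and substituting the resulting $a$ and $b$ back into the ansatz reproduces \eqref{Pz0} after a routine simplification.

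The only point that genuinely needs justification — and thus the main obstacle — is that the determinant $u^T\Gamma^2 u-M^2$ does not vanish, so that the system is uniquely solvable. By the Cauchy--Schwarz inequality applied to $u$ and $\Gamma u$ one has $(u^T\Gamma u)^2\le(u^Tu)(u^T\Gamma^2 u)$, i.e. $u^T\Gamma^2 u-M^2\ge 0$, with equality exactly when $\Gamma u$ is proportional to $u$, that is when $u$ is an eigenvector of $\Gamma$ with eigenvalue $M$. The full-rank property of $\nabla\mathbf{G}(u)=(u,\Gamma u)^T$ established above guarantees that $u$ and $\Gamma u$ are linearly independent at every point of $\M$, which rules out this degenerate case and forces $u^T\Gamma^2 u-M^2>0$. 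Since the orthogonal projection onto a subspace is unique, the vector built from this uniquely determined pair $(a,b)$ is indeed $\mathrm{P}_u w$, which finishes the argument.
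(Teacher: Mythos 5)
Your proposal is correct and follows essentially the same route as the paper: the same ansatz $\mathrm{P}_u w = w - a u - b\,\Gamma u$ based on $(T_u\M)^\perp = \mathrm{span}\{u,\Gamma u\}$, the same $2\times 2$ Gram system, and the same Cauchy--Schwarz argument for the nonvanishing of the determinant $u^T\Gamma^2 u - M^2$ (the paper phrases the strictness via the componentwise sums $\sum_l l^2\|u_l\|_2^2$, while you invoke the linear independence of $u$ and $\Gamma u$ from the full-rank property of $\nabla\mathbf{G}$; these are equivalent). No gaps.
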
 
\begin{proof}
From \eqref{eq:tspace} and the definition of $\mathrm{P}_u$, we have 
\be (T_u\M)^{\perp} = \{ \alpha u + \beta \Gamma u \mid \alpha ,\beta \in \R \}, \ee
and $w - \mathrm{P}_u w \in (T_u\M)^{\perp}$, 
therefore there exists $\alpha _w ,\beta _w \in \R$ such that
\be \label{eq:Pz} \mathrm{P}_u w = w -  \alpha _w u - \beta _w \Gamma u. \ee
Noticing that $\mathrm{P}_u w \in T_u\M$ which implies
\bea \label{eq:TM} u^T (\mathrm{P}_u w) = 0, \quad u^T \Gamma (\mathrm{P}_u w) = 0. \eea
We can obtain from \eqref{eq:Pz}  that
\bea \label{eq:TM1} \alpha _w u^Tu + \beta _w u^T \Gamma u = u^T w, \quad \alpha _w u^T \Gamma u + \beta _w u^T \Gamma^2 u = u^T \Gamma w.\eea
In view of the fact that  $u^T u = 1$ and $u^T \Gamma u = M$,  \eqref{eq:TM1} 
can be
simplified as 
\be \label{eq:lsys} \left( \begin{matrix}
1 & M \\
M & u^T \Gamma^2 u 
\end{matrix} \right)
\left( \begin{matrix}
\alpha _w \\ \beta _w 
\end{matrix} \right)
 = 
 \left( \begin{matrix}
 u^T w \\ u^T \Gamma w
\end{matrix} \right). \ee
It follows directly from \textit{Cauchy-Schwarz inequality} that
\bea u^T \Gamma^2 u = \sum_{l=-F}^{F}l^2 \Vert u_l\Vert_2^2 
> \frac{(\sum_{l=-F}^{l=F} l\Vert u_l\Vert_2^2)^2}{\sum_{l=-F}^{l=F} \Vert u_l\Vert_2^2} 
= \frac{(u^T \Gamma u)^2}{u^T u} = M^2, \eea
which ensures the linear system \eqref{eq:lsys} has a unique solution:
\be \label{eq:ab}
\left( \begin{matrix}
\alpha _w \\ \beta _w
\end{matrix} \right)
= \frac{1}{u^T \Gamma^2 u-M^2}
 \left( \begin{matrix}
u^T \Gamma^2 u & - M \\ 
-M & 1 
\end{matrix} \right)
 \left( \begin{matrix}
 u^T w \\ u^T \Gamma w
\end{matrix} \right). \ee
Substituting \eqref{eq:ab} into \eqref{eq:Pz} yields the formula \eqref{Pz0}.
\end{proof}

Let $\mathfrak{X
}(\M)$ be the set of smooth vector fields on $\M$. The Riemannian 
Hessian Hess$f(u)$ is a linear mapping from $T_u\M$ into itself defined as
\be \label{eq:hess1} \mathrm{Hess}f(u)[\xi_u] = 
(\widetilde{\nabla}_{\xi}\mathrm{grad}f)(u), ~~~ \xi \in \mathfrak{X}(\M), \ee
where $\widetilde{\nabla}$ denotes the Riemannian connection of $\M$.
Since $\M$ is a Riemannian submanifold of $\R^N$, according to \cite{Absil} its Riemannian 
connection reads 
\be \label{eq:rcon} (\widetilde{\nabla}_{\xi}\eta)(u) 
= \mathrm{P}_u(\nabla\eta_u\cdot\xi_u), 
~~~ \xi ,\eta \in \mathfrak{X}(\M). \ee
Thus we have 
\be\label{eq:hess2}
\mathrm{Hess}f(u)[\xi] = \mathrm{P}_u(\nabla\mathrm{grad}f(u)\cdot\xi), 
~~~ \xi \in T_u\M. 
\ee
The formula of Hess$f(u)$ is given in following lemma: 
\begin{lemma}[$\mathrm{Hess} f(u)$] \label{lemma:hess}
Given a tangent vector $\xi \in T_u\M$, and let $g$ and $H$ be the Euclidean gradient 
and Euclidean Hessian of $f$ respectively, then 
\be \label{Hess0} \mathrm{Hess}f(u)[\xi] = h_e - \alpha _g \xi -\beta _g \Gamma \xi 
- \frac{u^T \Gamma^2 u\cdot u^T h_e -M \beta_u}{\alpha_u} u + \frac{M u^T h_e -\beta_u}{\alpha_u} \Gamma u, \ee
where 
\be \alpha _g := (1+\frac{M^2}{\alpha_u})u^T g-\frac{M}{\alpha_u}u^T \Gamma g, 
\quad \beta _g := -\frac{M}{\alpha_u}u^T g+\frac{1}{\alpha_u}u^T \Gamma g, \ee
and $h_e := H(u)\cdot \xi, \alpha_u := u^T \Gamma^2 u - M^2, \beta_u := u^T \Gamma h_e - \beta_g u^T \Gamma^2 \xi$.
\end{lemma}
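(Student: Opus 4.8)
The plan is to feed the two ingredients already in hand — the closed form of $\mathrm{P}_u$ from \cref{lemma:px} and the intrinsic formula \eqref{eq:hess2} for the Hessian — into a direct computation, while exploiting the tangency of $\xi$ to discard most of the terms that would otherwise arise. First I would record the Riemannian gradient: since $\nabla f(u)=g$, applying \cref{lemma:px} with $w=g$ to \eqref{eq:grad2} gives $\mathrm{grad}f(u) = g - \alpha_g u - \beta_g \Gamma u$, where the scalars coincide with those stated in the lemma after the one-line rewriting $\frac{u^T\Gamma^2u\cdot u^Tg - M u^T\Gamma g}{\alpha_u} = (1+\frac{M^2}{\alpha_u})u^Tg - \frac{M}{\alpha_u}u^T\Gamma g$, using $\alpha_u + M^2 = u^T\Gamma^2u$, and similarly for $\beta_g$.

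Next I would regard $\mathrm{grad}f$ as a map $\R^N\to\R^N$ and take its Euclidean directional derivative along $\xi$, as \eqref{eq:hess2} requires. With $\nabla g\cdot\xi = H(u)\xi = h_e$, $\nabla u\cdot\xi = \xi$, and $\nabla(\Gamma u)\cdot\xi = \Gamma\xi$, the product rule yields
\[
\nabla\mathrm{grad}f(u)\cdot\xi = h_e - (\nabla\alpha_g\cdot\xi)\,u - \alpha_g\xi - (\nabla\beta_g\cdot\xi)\,\Gamma u - \beta_g\Gamma\xi .
\]
The crucial observation — and what keeps the argument short — is that $u$ and $\Gamma u$ span $(T_u\M)^{\perp}$, so $\mathrm{P}_u u = \mathrm{P}_u\Gamma u = 0$; hence the two terms carrying the unknown derivatives $\nabla\alpha_g\cdot\xi$ and $\nabla\beta_g\cdot\xi$ are annihilated by $\mathrm{P}_u$ and never need to be evaluated. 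Together with $\mathrm{P}_u\xi = \xi$ (since $\xi\in T_u\M$), this collapses the Hessian to $\mathrm{Hess}f(u)[\xi] = \mathrm{P}_u h_e - \alpha_g\xi - \beta_g\,\mathrm{P}_u(\Gamma\xi)$.

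Finally I would expand $\mathrm{P}_u h_e$ and $\mathrm{P}_u(\Gamma\xi)$ through \cref{lemma:px} once more. In the second expansion the tangency identity $u^T\Gamma\xi = 0$ simplifies matters to $\mathrm{P}_u(\Gamma\xi) = \Gamma\xi + \frac{M u^T\Gamma^2\xi}{\alpha_u}u - \frac{u^T\Gamma^2\xi}{\alpha_u}\Gamma u$. Substituting both expansions and collecting the coefficients of $u$ and of $\Gamma u$, the abbreviation $\beta_u = u^T\Gamma h_e - \beta_g u^T\Gamma^2\xi$ converts the two coefficients into exactly $-(u^T\Gamma^2u\cdot u^T h_e - M\beta_u)/\alpha_u$ and $(M u^T h_e - \beta_u)/\alpha_u$, which is \eqref{Hess0}.

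The main obstacle is conceptual rather than computational: recognizing that one need not differentiate the scalar coefficients $\alpha_g$ and $\beta_g$, because those contributions land in the normal space and are killed by $\mathrm{P}_u$. After that, the only remaining risk is bookkeeping — keeping the signs consistent while merging the two applications of \cref{lemma:px} and the $-\beta_g\Gamma\xi$ contribution into the compact $\beta_u$ notation so that the stated coefficients of $u$ and $\Gamma u$ emerge.
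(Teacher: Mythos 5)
Your proposal is correct and follows essentially the same route as the paper's proof: compute $\mathrm{grad}f(u)=g-\alpha_g u-\beta_g\Gamma u$ from Lemma~\ref{lemma:px}, differentiate along $\xi$, observe that the terms carrying $\nabla\alpha_g\cdot\xi$ and $\nabla\beta_g\cdot\xi$ lie in $(T_u\M)^{\perp}$ and are annihilated by $\mathrm{P}_u$, and then apply Lemma~\ref{lemma:px} to $h_e$ and $\Gamma\xi$ using the tangency relations $u^T\xi=u^T\Gamma\xi=0$. Your write-up in fact carries out the final coefficient collection more explicitly than the paper, which leaves that last step to the reader.
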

\begin{proof}
Recalling Lemma \ref{lemma:px}  and \eqref{eq:grad2}, we get 
\be \mathrm{grad}f(u) = g(u) - \alpha _g u - \beta _g \Gamma u, \ee
and
\bea \nabla \mathrm{grad}f(u)\cdot \xi &=& \nabla g(u)\cdot \xi 
- \nabla(\alpha _g u)\cdot \xi - \nabla(\beta _g \Gamma u)\cdot \xi \nonumber\\
&=& h_e - \alpha _g \xi - \beta _g \Gamma \xi - (\nabla \alpha _g ^T\xi)u - (\nabla \beta _g ^T\xi)\Gamma u. \nonumber
\eea
Since $(\nabla \alpha _g ^T\xi)u , ~(\nabla \beta _g ^T\xi)\Gamma u \in (T_u\M)^{\perp}$,  we have
\be \label{eq:hess-l1} \mathrm{Hess}f(u)[\xi] 
= \mathrm{P}_u (\nabla \mathrm{grad}f(u)\cdot \xi) 
= \mathrm{P}_u(h_e -\alpha _g \xi -\beta _g \Gamma \xi). \ee
For $\xi \in T_u\M$,  Lemma \ref{lemma:px}  and  \eqref{eq:hess-l1} lead to the formula \eqref{Hess0}.
\end{proof}

The first-order and second-order optimality conditions for optimization problems 
on Riemannian manifolds coincide
with the conventional ones \cite{Yang}.
If $u^{\ast}$ is a local solution of \eqref{dprob},  we have grad$\widetilde{E}(u^{\ast}) 
= 0$ and all the points $u$ at which grad$\widetilde{E}(u) = 0$ are called \textit{stationary points} of 
$\widetilde{E}$.

Line search optimization methods in $\R^N$ are based on the update formula
\be \label{eq:ls} u_{k+1} = u_{k} + t_k \eta_k, \ee
where $\eta_k \in \R^N$ is the \textit{search direction} and $t_k >0$ is the 
\textit{step size}. Correspondingly, when \eqref{eq:ls} is generalized to a manifold, 
$\eta _k$ is selected as a tangent vector, and the line search procedure relies on
the concept of \textit{retraction}:

\begin{definition}[retraction] \label{def:retr}
A retraction on a manifold $\M$ is a smooth mapping $R$ from the tangent bundle
$T\M$ onto $\M$ with the following properties. Let $R_u$ denote the restriction of 
$R$ to $T_u\M$.
\begin{itemize}
\item[(i)] $R_u(0_u) = u$, where $0_u$ denotes the zero element of $T_u\M$.
\item[(ii)] With the canonical identification $T_{0_u}T_u\M \simeq T_u\M$, $R_u$
satisfies
\be \label{eq:localrigidity} \mathrm{D}R_u(0_u) = \mathrm{id}_{T_u\M}, \ee
where $\mathrm{id}_{T_u\M}$ denotes the identity mapping on $T_u\M$.
\end{itemize}
\end{definition}

\begin{remark}
When $M\in \Z$, $\M$ is not a well-defined manifold. However, 
by restricting the feasible region to 
$$\widetilde{\M} := \left\{ u\in \M \mid \mathrm{at~ least~ two~ components~ of~} u
\mathrm{~is~nonzero}\right\}, $$
we can also define above structures and the formulas still work.
This modification does not change our numerical experiments.
\end{remark}

\section{A Modified Adaptive Regularized Newton Method}
\label{sect:arnt}

We aim to solve \eqref{dprob} with a modified version of the adaptive regularized 
Newton method (ARNT) developed in \cite{HU}. At the $k$-th iteration, ARNT uses a 
second-order Taylor model with a penalization term to approximate the 
original objective function but keeps the constraint $u \in \M$. 

Specifically, the method replaces \eqref{dprob} with a sequence of quadratic subproblems:
\be \label{prob-sub} \min_{u\in \M} m_k (u) := \langle \nabla \widetilde{E}(u_k), u-u_k\rangle + 
\frac{1}{2}\langle H_k (u-u_k),u-u_k\rangle + \frac{\sigma_k}{2}\| u-u_k \|_2^2 , \ee
where $\langle \cdot , \cdot \rangle$ denotes the dot product in $\R^N$ and $H_k$ is the 
Euclidean Hessian of $\widetilde{E}$ at $u_k$. The subproblem \eqref{prob-sub} is solved approximately
by applying a modified conjugate gradient (CG) method to the linear system
\be \label{prob-lsub} \mathrm{grad}\, m_k (u_k) + \mathrm{Hess}\, m_k (u_k)[\xi] = 0. \ee
The method terminates when either certain accuracy is reached or negative curvature
is detected. It outputs two vectors $s_k$ and $d_k$, where $s_k$ is the solution computed by CG method and $d_k$ represents 
the negative curvature information. The new search direction $\xi_k$ is chosen as
\begin{equation} \label{eq:xi}
\xi_k =\begin{cases}
s_k + \tau_k d_k & {\mathrm{if~} d_k \neq 0}, \\
s_k & {\mathrm{if~} d_k = 0}, 
\end{cases}
\quad \mathrm{with~} \tau_k := \frac{\langle d_k ,\mathrm{grad}\, m_k (u_k)\rangle_{u_k}}
{\langle d_k ,\mathrm{Hess}\, m_k (u_k)[d_k]\rangle_{u_k}}, 
\end{equation}
which is  a descent direction  (cf. Lemma 7, \cite{HU}). 

After construction of $\xi_k$, a monotone Armijo-based curvilinear search 
is conducted to generate a trial point
\be \label{eq:tp1} z_k = R_{u_k}(\alpha_0 \delta^\varsigma \xi_k), \ee
where $\varsigma$ is the smallest integer satisfying 
\be \label{eq:tp2} m_k (R_{u_k} (\alpha_0 \delta^\varsigma \xi_k)) \leq \rho \alpha_0 
\delta^\varsigma 
\langle \mathrm{grad}\, m_k (u_k), \xi_k \rangle_{u_k} \ee
and $\rho ,\delta \in (0,1), \alpha_0 \in (0,1]$ are given constants.

In order to monitor the acceptance of the trial point $z_k$ and adjust the regularization 
parameter $\sigma_k$, the above procedure is embedded in a trust-region framework 
where $\sigma_k$ plays a similar role as the trust-region radius and is updated according 
to the ratio 
\be \label{eq:ratio} \rho_k = \frac{\widetilde{E}(z_k)-\widetilde{E}(u_k)}{m_k (z_k)}. \ee

ARNT may exhibit a certain instability when directly applied to solve \eqref{dprob}.
To improve its performance, we combine it with the cascadic multigrid method in
\cite{Bornemann}.  In detail, we first solve \eqref{dprob} on the coarsest mesh, and then 
use the obtained solution as the initial guess of the problem on a finer mesh, and 
repeat until reaching the finest mesh. 

On each mesh, we use the Riemannian gradient method with  a BB step size (RGBB) in \cite{HU} 
to compute an initial point for ARNT. At the $k$-th iteration, RGBB performs a nonmonotone 
Armijo-based curvilinear search along the steepest descent direction 
$\eta_k = -\grad \widetilde{E}(u_k)$.
Given $\rho , \varrho , \delta \in (0,1)$, it tries to find the smallest integer $\varsigma$ satisfying 
\be \label{NMN} \widetilde{E}(R_{u_k}( \gamma_k \delta^\varsigma \eta_k)) \leq C_k +\rho  \gamma_k \delta^\varsigma
 \langle \mathrm{grad}\widetilde{E}(u_k),\eta_k\rangle_{u_k}, \ee
where the initial step size $\gamma_k$ is computed as 
$\gamma_k = |\langle s_{k-1},v_{k-1}\rangle_{u_k}| / \langle v_{k-1}, v_{k-1}\rangle_{u_k}$,
with 
$ s_{k-1} = u_k - u_{k-1}, \quad v_{k-1} = \mathrm{grad}\widetilde{E}(u_k) - \mathrm{grad}\widetilde{E}(u_{k-1}). $
The value $C_{k+1}$ is calculated via $C_{k+1} = (\varrho Q_k C_k + \widetilde{E}(u_{k+1}))/Q_{k+1}$, 
with $C_0 = \widetilde{E}(u_0), Q_{k+1} = \varrho Q_k + 1$ and $Q_0 = 1$. 

The modified adaptive regularized Newton method (still referred to as ARNT in this paper) 
is presented in Algorithm \ref{alg:ARNT}.

\begin{algorithm2e}[t]\caption{A Modified Adaptive Regularized Newton Method}
\label{alg:ARNT}
Choose an initial mesh $\mathcal{T}^0$ and $u^{(0)}$. Set $j=0$. \\

\While{$j \leq m$}{
Input $u_0 = u^{(j)}$. Set $k=0, C_0=\widetilde{E}(u_0), Q_0=1$. \\
\While{stopping conditions not met}{
Compute $\eta_k = -\grad \widetilde{E}(u_k)$. \\
Compute $\gamma_k, C_k, Q_k$ and find the $\varsigma$ satisfying \eqref{NMN}. \\
Set $u_{k+1} \leftarrow R_{u_k}(\gamma_k \delta^\varsigma \eta_k)$. \\
Set $k\leftarrow k+1$.}
\smallskip

Input $u_0=u_k$. Choose $0 < \eta_1 \leq \eta_2 < 1, 0 < \gamma_0 < 1 < \gamma_1 
\leq \gamma_2$ and an initial regularization parameter $\sigma_0 > 0$. Set $k=0$. \\
\While{stopping conditions not met}{
Compute a new trial point $z_k$ according to \eqref{eq:tp1} and \eqref{eq:tp2}. \\
Compute the ratio via \eqref{eq:ratio}. \\
\If{$\rho_k \geq \eta_1$}{
Set $u_{k+1} = z_k$. \\
\lIf{$\rho_k \geq \eta_2$}{choose $\sigma_{k+1} \in (0,\gamma_0 \sigma_k]$}
\lElse{choose $\sigma_{k+1} \in [\gamma_0 \sigma_k, \gamma_1 \sigma_k]$}
}
\Else{
Set $u_{k+1} = u_k$. \\
Choose $\sigma_{k+1} \in [\gamma_1 \sigma_k, \gamma_2 \sigma_k]$.
}
$k \leftarrow k+1$.}
\smallskip

Set $u^{(j+1)}=u_k$. Refine the mesh $\mathcal{T}^j$ uniformly to obtain $\mathcal{T}^{j+1}$.\\
$j \leftarrow j+1$.
}
\end{algorithm2e}

%
%

\section{Retractions}
\label{sect:retr}

The selection of retractions can affect the performance of Riemannian optimization algorithms. 
In this section, we try to find retractions of the form 
\be \label{eq:oretr0} R_u (\xi_u) := \psi (u+\xi_u), \quad u\in \M, \xi_u \in T_u\M, \ee
where $\psi $ is some ``projection" from a neighborhood of $\M$ in $\R^N$ to $\M$.

For $w=(w_F,w_{F-1},\ldots,w_{-F})\in \R^N$ ($w_l\in\R^{2n}$, $N=2n(2F+1)$, $l=F,\ldots,-F$), we define two functions $f_1(w),f_2(w)\in\{F,F-1,\ldots,-F\}$: 
\be f_1 (w) := \min \{ l \mid w_l \neq 0 \},\quad f_2 (w) := \max \{ l\mid w_l\neq 0\} , 
\quad \forall w \in \R^N .\ee
Observe that at every point $u\in \M$, the constraints indicate 
\be \sum_{l=-F}^F (l-M) \|u_l\|_2^2 = 0. \ee
Thus when $M\notin \Z$, we have $f_1(u) < M, f_2(u) > M$,
which is equivalent to 
\be \label{eq:noneq6} \sum_{l<M}\| u_l \|_2^2 > 0, ~~~ \sum_{l>M}\| u_l \|_2^2 > 0; \ee
when $M\in \Z$, \eqref{eq:noneq6} also holds for $u\in \widetilde{\M}$.
Define the open set $\Omega$ as 
\be\Omega := \left\{ w\in \R^N \mid \sum_{l<M}\| w_l \|_2^2 > 0, ~\sum_{l>M}\| w_l \|_2^2 > 0  
\right\} ,\ee
then $\Omega\supset \M$ for $M\notin \Z$ and $\Omega\supset \widetilde{\M}$ for 
$M\in \Z$.
For the simplicity of presentation, we will discuss three different retractions from $\Omega$ to $\M$ ($M\notin \Z$) and all the results hold also for $\widetilde{\M}$ ($M\in Z$). 

\subsection{Projective Retraction}
\label{sect:proj}

The most intuitive retraction $\psi$ is given by the \textit{projection operator} 
$\cP_{\M}$, which is defined as 
\be \label{prob-sub2} \cP_{\M}(w)=\arg \min_{z \in \M}\quad \frac{1}{2} \|z-w\|_2^2 , 
~~~ w \in \R^N. \ee
According to \cite{AbsilR}, $\cP_{\M}$ is a well-defined function (existence and uniqueness of the projection hold) in a neighborhood 
$\widetilde{\Omega}\subset \Omega$ of $\M$, 
and the mapping $R_u(\xi_u):= \cP_{\M}(u+\xi_u)$ is a well-defined retraction on $\M$, called 
the \textit{projective retraction} in this paper.
The explicit formula is given as follows.
\begin{lemma} \label{lemma:r1}
For an arbitrary point $w \in \widetilde{\Omega}$, $\cP_{\M}  (w)$ reads 
\be \cP_{\M}  (w)_l=\begin{cases}
0 & {\mathrm{if~} l<f_1(w)\mathrm{~~or~~} l>f_2(w)}, \\
{\frac{w_l}{1-\mu -l\lambda}} & {\mathrm{if~} f_1(w)\leq l \leq f_2(w)},
\end{cases}
\ee
where
\bea \lambda = r\sqrt{\sum_{l=f_1(w)}^{f_2(w)}\frac{\| w_l \|_2^2}{[1-(l-M)r]^2}}~, \quad
\mu = 1 - (1+Mr)\sqrt{\sum_{l=f_1(w)}^{f_2(w)}\frac{\| w_l \|_2^2}{[1-(l-M)r]^2}}~, \eea
and $r$ is the unique zero of the function 
\be h_1(t) = \sum_{l=f_1(w)}^{f_2(w)}\frac{(l-M)\| w_l \|_2^2}{[1-(l-M)t]^2}, 
\quad t \in ( \frac{1}{f_1(w)-M}, \frac{1}{f_2(w)-M} ).\label{eq:h1} \ee
\end{lemma}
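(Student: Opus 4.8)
The plan is to characterize $\cP_{\M}(w)$ through the first-order (Lagrange) optimality conditions of the constrained least-squares problem \eqref{prob-sub2}, and then to reduce the resulting two-multiplier system to a single scalar equation. Since \cite{AbsilR} already guarantees that on $\widetilde{\Omega}$ the minimizer $z^{*}:=\cP_{\M}(w)$ exists and is unique, it suffices to produce the unique stationary point of the expected form and to identify it with $z^{*}$. First I would form the Lagrangian $\tfrac12\|z-w\|_2^2+\mu(\|z\|_2^2-1)+\lambda(z^T\Gamma z-M)$, write it blockwise in the components $z_l\in\R^{2n}$, and set its gradient to zero; this yields $z_l\,D_l=w_l$ with $D_l:=1+2\mu+2\lambda l$, which after renaming the multipliers is exactly $z_l=w_l/(1-\mu-l\lambda)$.

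Next I would pin down the support and the sign of the denominators. Varying only the direction of $z_l$ while keeping $\|z_l\|_2$ fixed leaves both constraints untouched, so at the minimizer each $z_l$ with $w_l\neq0$ is aligned with $w_l$; combined with $z_l=w_l/D_l$ this forces $D_l=\|w_l\|_2/\|z_l\|_2>0$ for every $l$ with $w_l\neq0$, in particular $D_{f_1(w)}>0$ and $D_{f_2(w)}>0$. Because $D_l$ is affine in $l$, it is then positive for all $l\in[f_1(w),f_2(w)]$, which makes $z_l=w_l/D_l$ vanish at every interior index with $w_l=0$ and, after excluding the degenerate resonance $D_l=0$ by a local mass-redistribution argument, also at all $l<f_1(w)$ and $l>f_2(w)$. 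This gives the first branch of the formula and confines the support to $f_1(w)\le l\le f_2(w)$.

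The crux is the scalar reduction. Writing the denominator as $1-\mu-l\lambda=S\,[1-(l-M)r]$ with $S>0$ and the identifications $\lambda=rS$, $1-\mu=(1+Mr)S$ of the statement, the normalization constraint becomes $\sum_l \|w_l\|_2^2/\{S^2[1-(l-M)r]^2\}=1$, which holds \emph{identically} once $S$ is taken to be the square root appearing in the formulas for $\lambda$ and $\mu$; thus the choice of $S(r)$ makes $\|z\|_2=1$ automatic for every admissible $r$. Subtracting $M$ times the normalization from the magnetization constraint then collapses the latter to $\sum_l (l-M)\|w_l\|_2^2/[1-(l-M)r]^2=0$, i.e.\ to $h_1(r)=0$. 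Hence the whole system is equivalent to locating a single root $r$ of $h_1$, from which $\lambda$, $\mu$, and finally $z$ are recovered by the stated expressions.

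It remains to show $h_1$ has exactly one zero in $I:=(1/(f_1(w)-M),\,1/(f_2(w)-M))$. Since $w\in\Omega$ forces $f_1(w)<M<f_2(w)$, the interval contains $0$, and by comparing reciprocals none of the factors $1-(l-M)t$ vanishes on $I$; being equal to $1$ at $t=0$, each stays strictly positive throughout $I$. Consequently $h_1'(t)=\sum_l 2(l-M)^2\|w_l\|_2^2\,[1-(l-M)t]^{-3}>0$, so $h_1$ is strictly increasing, while tracking the boundary terms $l=f_1(w)$ and $l=f_2(w)$ shows $h_1(t)\to-\infty$ at the left endpoint and $h_1(t)\to+\infty$ at the right endpoint, so the intermediate value theorem supplies a unique root $r\in I$. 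The main obstacle is not this monotonicity argument but the clean matching of the constructed stationary point with the genuine projection: one must be certain the minimizer lies on the $D_l>0$ branch and carries no mass outside $[f_1(w),f_2(w)]$, which is precisely what the alignment and affinity arguments of the second step are meant to secure.
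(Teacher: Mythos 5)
Your proposal is correct and follows essentially the same route as the paper: Lagrangian stationarity giving $z_l=w_l/(1-\mu-l\lambda)$, positivity of the denominators on $[f_1(w),f_2(w)]$ (your alignment argument is the paper's ``maximize $w_l^Tz_l$'' argument in different words, and the $w_l=0\Rightarrow z_l=0$ step is handled by the same uniqueness/sign-flip reasoning), the substitution $r=\lambda/(1-\mu-M\lambda)$ reducing the two constraints to $h_1(r)=0$ plus an explicit formula for the scale factor, and monotonicity of $h_1$ with the boundary limits $\mp\infty$ for uniqueness of the root. No substantive differences from the paper's proof.
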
 
\begin{proof}
Define the Lagrangian function of \eqref{prob-sub2} as
\be L(z,\mu, \lambda) =  \frac{1}{2} \|z-w\|_2^2 - \frac{\mu}{2} \left( \|z\|_2^2 -1\right) -
\frac{\lambda}{2} \left( \sum_{l=-F}^F l
\|z_l\|_2^2 - M\right).\ee
Let $z=\cP_{\M}(w)$, then the condition $\nabla_l L(z,\mu, \lambda) = 0$ gives
  \bea \label{eq:optw1}
(1-\mu-l\lambda)z_l=w_l, \quad l=F, ..., -F. \eea
Since $\|z-w_l\|_2^2=\|z\|_2^2+\|w\|_2^2-2 \sum_l w_l^Tz_l=1+\|w\|_2^2-2 \sum_l w_l^Tz_l$ ($z\in\M$), the projection $z$ maximizes $w_l^Tz_l$ which leads to $1-\mu-l\lambda>0$ for $w_l\neq0$. 
On the other hand, if $w_l=0$ then $z_l=0$, otherwise substituting $z_l$ with$-z_l$ yields a different projection of $w$, which contradicts the uniqueness.

Since $w_{f_1(w)}$ and $w_{f_2(w)}$ are nonzero, we have 
\be \label{lcond1} 1-\mu -f_1(w) \lambda > 0, \quad 1-\mu -f_2(w) \lambda > 0, \ee
which is equivalent to  $1-\mu -M\lambda > (f_1(w)-M)\lambda$ ,  $1-\mu -M\lambda > (f_2(w) -M)\lambda$ , 
and 
\bea \label{lcond2} 1-\mu -M\lambda > 0, \quad 
 \frac{1}{f_1(w)-M} < \frac{\lambda}{1-\mu -M\lambda} < \frac{1}{f_2(w) -M}. \eea
The inequalities in \eqref{lcond1}  indicate that $1-\mu -l\lambda>0$  for 
$l = f_1(w)+1, ..., f_2(w)-1,$
and from \eqref{eq:optw1} and $z \in \M$ we have
\bea \label{lcond3}\sum_{l=f_1(w)}^{f_2(w)} \frac{\| w_l\|_2^2}{(1-\mu -l\lambda)^2} = 1, 
\quad \sum_{l=f_1(w)}^{f_2(w)} \frac{l\| w_l\|_2^2}{(1-\mu -l\lambda)^2} = M. \eea
In view of \eqref{lcond2} - \eqref{lcond3}, denoting
\be \label{eq:st} s = (1-\mu -M\lambda)^2, \quad
r = \frac{\lambda}{1-\mu -M\lambda}, \ee
recalling the definition of function $h_1(\cdot)$ in \eqref{eq:h1}, we have  $ s>0, \quad \frac{1}{f_1(w)-M}<r<\frac{1}{f_2(w)-M} $ and 
\be \label{eq:rprob} h_1(r) = 0, \quad 
s = \sum_{l=f_1(w)}^{f_2(w)}\frac{\| w_l \|_2^2}{[1-(l-M)r]^2}. \ee
For any $t \in ( \frac{1}{f_1(w)-M}, \frac{1}{f_2(w)-M} )$, we have
\be \label{hderi} h_1'(t) = \sum_{l=f_1(w)}^{f_2(w)} \frac{2(l-M)^2\| w_l \|_2^2}
{[1-(l-M)t]^3}~ > 0. \ee
In addition, noticing that
\be \label{hlim} \lim_{t\rightarrow \frac{1}{f_1(w)-M}+0}h_1(t) = -\infty, 
 \lim_{t\rightarrow \frac{1}{f_2(w)-M}-0}h_1(t) = +\infty, \ee
 $h_1(\cdot)$ has exactly one zero in $( \frac{1}{f_1(w)-M}, \frac{1}{f_2(w)-M} )$.
Substituting \eqref{eq:rprob} into \eqref{eq:st},  the formulas of $\lambda$ and $\mu$ 
can be obtained accordingly.
\end{proof}

We remark that \eqref{lemma:r1} can be applied to any $w\in \Omega$.
For spin-1 case, the closed-form solution of \eqref{prob-sub2} is computable.
\begin{lemma}\label{lemma:subu} 
When $F=1$, given any nonzero $w\in \Omega$, the optimal solution 
$z=(z_1;z_0;z_{-1})$ of \eqref{prob-sub2} is

(1) If $M=0$, then $z_0 = w_0/t$ and 
\be z_l = \frac{\|w_1\|_2+\|w_{-1}\|_2}{2 t \|w_l\|_2} w_l, \quad l = \pm 1, \ee
with $t = \sqrt{\|w_0\|_2^2 + (\|w_1\|_2+\|w_{-1}\|_2)^2/2}$.

(2) If $M>0$ and $w_0=0$, then $z_0=0$, 
\be z_l=\frac{\sqrt{1+lM}}{\sqrt{2}\|w_l\|_2}w_l,  \quad l=\pm1. \ee

(3) If $M>0$ and $w_{-1}=0$, then $z_{-1}=0$, 
\be z_0 = \frac{\sqrt{1-M}}{\|w_0\|_2} w_0, \quad z_1 = \frac{\sqrt{M}}{\|w_1\|_2} w_1. \ee

(4) If $M>0$ and $\|w_0\|, \|w_{-1}\|>0$, then $z_l=w_l/(1-\mu-l\lambda)$ ($l=\pm1,0$) 
with 
\begin{equation}
\mu=1-\frac{\|w_0\|_2}{\alpha},\qquad \lambda=\frac{\|w_1\|_2}{\sqrt{1+M-\alpha^2}}-\frac{\|w_0\|_2}{\alpha},
\end{equation}
where
\be \alpha=\frac{\sqrt{1-M^2}\beta}{\sqrt{2M+(1+M)\beta^2}}, \ee
with $\beta$ depending on $\|w_l\|$ as
\begin{align}
&\beta=\frac{\xi}{2}+S-\frac{1}{2}\sqrt{-4S^2-2p_0-q_0/S},\qquad p_0=\frac{-\xi^2-2\zeta^2+2}{2},\quad q_0=-\xi(\zeta^2+1),\\
&\Delta_0=(\xi^2-\zeta^2+1)^2,\quad \Delta_1=2(\xi^2-\zeta^2+1)^3+108\xi^2\zeta^2,
\end{align}
and
\be
S=\begin{cases}
\frac{1}{2}\sqrt{-\frac{2}{3}p_0+\frac{1}{3}\left(Q+\frac{\Delta_0}{Q}\right)},\qquad\text{with } Q=\sqrt[^3]{\frac{\Delta_1+\sqrt{\Delta_1^2-4\Delta_0^3}}{2}},
&x^{2/3}+1\leq y^{2/3},\\
\frac{1}{2}\sqrt{-\frac{2}{3}p_0+\frac{2}{3}\sqrt{\Delta_0}\cos\left(\frac{1}{3}\arccos\left(\frac{\Delta_1}{2\sqrt{\Delta_0^3}}\right)\right)},&
x^{2/3}+1> y^{2/3},
\end{cases}
\ee
\be \xi= \frac{2\sqrt{M}\|w_0\|_2}{\sqrt{1+M}\|w_{-1}\|_2},
\quad \zeta= \frac{\sqrt{1-M}\|w_1\|_2}{\sqrt{1+M}\|w_{-1}\|_2}.\ee
\end{lemma}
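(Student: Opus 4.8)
The plan is to specialize the general projection formula of Lemma~\ref{lemma:r1} to $F=1$ and then to solve explicitly, in each case, for the scalar quantities that formula leaves implicit. Writing $a_l := \|w_l\|_2$ and recalling from the proof of Lemma~\ref{lemma:r1} that the optimal $z$ aligns each block with $w_l$ (so that $z_l = t_l\, w_l/a_l$ with $t_l := \|z_l\|_2 \ge 0$ whenever $a_l\neq0$, and $z_l=0$ when $a_l=0$), the whole problem collapses to choosing three nonnegative norms $t_1,t_0,t_{-1}$ that maximize $a_1 t_1 + a_0 t_0 + a_{-1} t_{-1}$ subject to $t_1^2+t_0^2+t_{-1}^2 = 1$ and $t_1^2 - t_{-1}^2 = M$. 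I would record this reduction first, since it replaces an optimization over $\R^N$ by one over three scalars, and is exactly the content carried by the denominators $1-\mu-l\lambda$ in Lemma~\ref{lemma:r1}.

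The three ``easy'' cases are then dispatched by a single degeneracy observation: whenever one of the three norms is pinned to zero, the two quadratic constraints alone determine the remaining two, and no optimization is left. In case~(2) the hypothesis $w_0=0$ forces $t_0=0$, whereupon $t_1^2+t_{-1}^2=1$ and $t_1^2-t_{-1}^2=M$ give $t_{\pm1}^2 = (1\pm M)/2$ at once; case~(3) is identical with the roles of $t_0$ and $t_{-1}$ interchanged ($w_{-1}=0$ forces $t_{-1}=0$, so $f_1(w)=0$). For case~(1), $M=0$ together with $t_{\pm1}\ge0$ forces $t_1=t_{-1}$ through $t_1^2-t_{-1}^2=0$, after which maximizing $(a_1+a_{-1})t_1 + a_0 t_0$ under $2t_1^2+t_0^2=1$ is a one-line Cauchy--Schwarz computation returning $t=\sqrt{a_0^2+(a_1+a_{-1})^2/2}$ and the stated minimizer. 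Each of these yields the claimed closed form immediately.

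The substance lies in case~(4), where $a_0,a_{-1},a_1>0$ and $M>0$, so all three norms are positive and both multipliers are active. Here I would parametrize the one-dimensional feasible curve by $\alpha:=t_0$, giving $t_1 = \sqrt{(1+M-\alpha^2)/2}$ and $t_{-1}=\sqrt{(1-M-\alpha^2)/2}$, and impose stationarity of $J(\alpha)=a_1 t_1(\alpha)+a_0\alpha+a_{-1}t_{-1}(\alpha)$ (equivalently $h_1(r)=0$ from Lemma~\ref{lemma:r1}). This produces an equation carrying two distinct radicals; rationalizing it through the normalized unknown $\beta$ (via the stated substitution linking $\alpha$, $\beta$ and the ratios $\xi,\zeta$ of the $a_l$) clears both radicals at once and reduces stationarity to a single quartic in $\beta$. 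After depressing that quartic to the form with coefficients $p_0,q_0$, I would apply Ferrari's method: its resolvent cubic, whose discriminant data are $\Delta_0,\Delta_1$, has a real root $S$ given by Cardano's radical ($Q$) formula when $\Delta_1^2-4\Delta_0^3\ge0$ and by the trigonometric ($\arccos$) formula in the casus irreducibilis $\Delta_1^2-4\Delta_0^3<0$; rewriting the sign of this discriminant in terms of the $\xi,\zeta$ data is precisely the stated dichotomy $x^{2/3}+1\lessgtr y^{2/3}$. Back-substituting $S$ into the Ferrari root expression gives $\beta$, hence $\alpha$, and finally $\mu$ and $\lambda$.

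I expect the main obstacle to be case~(4), and within it the genuinely delicate point is \emph{root selection} rather than the derivation itself. A quartic admits up to four real roots, whereas Lemma~\ref{lemma:r1} guarantees a \emph{unique} admissible $r$ in $(\tfrac{1}{f_1(w)-M},\tfrac{1}{f_2(w)-M})$; I must therefore verify that the single sign combination written in the statement (the particular Ferrari root, together with the correct branch for $S$) is the one producing $\alpha\in(0,\sqrt{1-M})$ with $t_1,t_{-1}>0$, and discard the spurious roots introduced by the two squarings. The conceptual ingredients are modest — Lagrange conditions plus Ferrari's formula — so the real effort is the algebraic verification that the chosen radicals reproduce the unique projection.
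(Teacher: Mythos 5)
Your proposal follows essentially the same route as the paper: cases (1)–(3) are dispatched directly, and for case (4) your stationarity condition in $\alpha=\|z_0\|_2$ is exactly the paper's equation \eqref{eq:sol4} in $s$, which the paper obtains from the Lagrange multiplier relations rather than by differentiating $J(\alpha)$ along the feasible curve. The paper stops at ``there exists a unique solution $s\in(0,\sqrt{1-M})$ and the Lagrange multipliers can be identified,'' so your Ferrari/resolvent-cubic outline (including the correct reading of the $x^{2/3}+1\lessgtr y^{2/3}$ dichotomy as the sign of $\Delta_1^2-4\Delta_0^3$, and the root-selection caveat) merely fills in algebra the paper leaves implicit.
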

\begin{proof}
The first three cases are straightforward to verify. Here we only present the proof for 
case (4).
If $M>0$ and $\|w_0\|_2, \|w_{-1}\|_2>0$, then $z_l=w_l/(1-\mu-l\lambda)$ ($l=0,\pm1$). 
Let $\|z_0\|_2=s$, we have from \eqref{eq:optw1} that $1-\mu=\|w_0\|_2/s$ and
\be
\frac{\sqrt{2}\|w_1\|_2}{\sqrt{1+M-s^2}}+\frac{\sqrt{2}\|w_{-1}\|_2}{\sqrt{1-M-s^2}}=2\frac{\|w_0\|_2}{s},
\ee
which implies
\be\label{eq:sol4}
\frac{s}{\sqrt{2}\sqrt{1+M-s^2}}\cdot\frac{\|w_1\|_2}{\|w_0\|_2}+\frac{s}{\sqrt{2}\sqrt{1-M-s^2}}\cdot\frac{\|w_{-1}\|_2}{\|w_0\|_2}=1.
\ee
There exists a unique solution $s\in(0,\sqrt{1-M})$ and the Lagrange multipliers can be identified.
\end{proof}

\subsection{Orthogonal Retraction}
\label{sect:orth}

Inspired by the projective retraction, we can consider $\psi$ of the form 
\be \label{eq:oretr1} \psi  (w)_l = \sigma_l w_l , \quad l = F, ..., -F \ee
with undetermined positive coefficients $\sigma_F , ..., \sigma_{-F}$.
Besides the constraints 
\be \label{eq:oretr2} \sum_{l=-F}^{F} \|w_l\|_2^2 \sigma_l^2 = 1, 
\quad \sum_{l=-F}^{F} l\|w_l\|_2^2 \sigma_l^2 = M, \ee
we  have to introduce additional $2F-1$ conditions to uniquely determine the $2F+1$ coefficients.

In \cite{Bao1}, Bao and Lim proposed the condition $\sigma_1 \sigma_{-1} = 
\sigma_0 ^2$ for spin-1 BEC. It can be generalized to 
\begin{equation} \label{eq:orth}
\begin{cases}
\sigma_{l-1} \sigma_{l+1} = \sigma_l^2 , \quad  &l = 1, 2, ..., F-1, \\
 \sigma_l \sigma_{-l} = \sigma_0^2, \quad &l = 1, 2, ..., F 
\end{cases}
\end{equation}
for spin-$F$ cases. The mapping $R$ characterized by \eqref{eq:oretr0} and 
\eqref{eq:oretr1} -  \eqref{eq:orth} is called the \textit{orthogonal retraction} in this paper.
\begin{lemma} \label{lemma:r2}
For an arbitrary point $w\in \Omega$, $\psi(w)$ defined by \eqref{eq:oretr1} -  
\eqref{eq:orth} reads 
\be \psi(w)_l = \sqrt{\frac{r^l}{\sum_{k=-F}^{F}\|w_l\|_2^2 r^k}}\cdot w_l ,\quad 
l = F, ..., -F, \ee
where $r$ is the unique positive zero of the polynomial
\be h_2(t) = \sum_{l=0}^{2F} (l-F-M)\|w_{l-F}\|_2^2 \cdot t^l. \ee
\end{lemma}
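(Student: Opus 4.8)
The plan is to use the multiplicative side conditions \eqref{eq:orth} to collapse the $2F+1$ unknowns $\sigma_F,\ldots,\sigma_{-F}$ down to a single free parameter, and then fix that parameter with the two genuine constraints \eqref{eq:oretr2}. First, I would read $\sigma_{l-1}\sigma_{l+1}=\sigma_l^2$ as the statement that the ratio $\sigma_{l+1}/\sigma_l$ does not depend on $l$, so that $\sigma_0,\sigma_1,\ldots,\sigma_F$ form a geometric progression; setting $q:=\sigma_1/\sigma_0>0$ gives $\sigma_l=\sigma_0 q^l$ for $l=0,\ldots,F$. The conditions $\sigma_l\sigma_{-l}=\sigma_0^2$ then force $\sigma_{-l}=\sigma_0 q^{-l}$, so $\sigma_l=\sigma_0 q^l$ in fact holds for all $l=-F,\ldots,F$. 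Writing $r:=q^2>0$ yields $\sigma_l^2=\sigma_0^2\,r^l$, which is the key reduction.

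Next I would substitute $\sigma_l^2=\sigma_0^2 r^l$ into the two constraints in \eqref{eq:oretr2}. The normalization constraint gives $\sigma_0^2=\big(\sum_{k=-F}^F\|w_k\|_2^2 r^k\big)^{-1}$, and hence $\psi(w)_l=\sigma_l w_l=\sqrt{r^l/\sum_k\|w_k\|_2^2 r^k}\,w_l$, exactly the claimed formula. Dividing the magnetization constraint by the normalization constraint cancels $\sigma_0^2$ and leaves $\sum_l (l-M)\|w_l\|_2^2 r^l=0$; multiplying by the positive factor $r^F$ and reindexing by $m=l+F$ turns this into $h_2(r)=0$. Thus every admissible $r$ is a positive zero of $h_2$, and conversely each positive zero reconstructs a valid coefficient tuple, so the lemma reduces to showing $h_2$ has exactly one positive zero.

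This uniqueness is the heart of the argument. The coefficient of $t^m$ in $h_2$ is $(m-F-M)\|w_{m-F}\|_2^2$, whose sign equals that of $l-M$ with $l=m-F$: negative when $l<M$, positive when $l>M$ (and, since $M\notin\Z$, vanishing only when the corresponding $\|w_l\|_2=0$). Hence the nonzero coefficients change sign exactly once, from negative to positive, and the hypothesis $w\in\Omega$—which guarantees $\sum_{l<M}\|w_l\|_2^2>0$ and $\sum_{l>M}\|w_l\|_2^2>0$—ensures that both signs actually occur. By Descartes' rule of signs, $h_2$ then has precisely one positive root.

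As an alternative and perhaps more transparent route to uniqueness, I would view $\phi(t):=\sum_l l\,\|w_l\|_2^2 t^l\big/\sum_l\|w_l\|_2^2 t^l$ as the mean of $l$ under the weights $\|w_l\|_2^2 t^l$. A short computation identifies $t\,\phi'(t)$ with the variance of $l$ under these weights, which is strictly positive because $f_1(w)<M<f_2(w)$ forces at least two indices to carry positive weight; hence $\phi$ is strictly increasing, running from $f_1(w)<M$ as $t\to 0^+$ to $f_2(w)>M$ as $t\to\infty$. The intermediate value theorem then gives a unique $r>0$ with $\phi(r)=M$, i.e.\ $h_2(r)=0$.
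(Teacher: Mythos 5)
Your proposal is correct. The reduction of the $2F+1$ coefficients to a single parameter via the geometric-progression reading of \eqref{eq:orth}, the identification $\sigma_l^2=\sigma_0^2 r^l$, and the derivation of $\sigma_0^2=\bigl(\sum_k\|w_k\|_2^2r^k\bigr)^{-1}$ together with $h_2(r)=0$ are exactly what the paper does. Where you genuinely diverge is the uniqueness of the positive zero. The paper passes to $h_3(t)=h_2(t)t^{-f_1(w)-F}$, gets existence from $h_3(0)<0$ and $h_3(t)\to+\infty$, and then proves by a somewhat laborious Cauchy--Schwarz-type estimate that $h_3'(r_0)>0$ at \emph{every} zero $r_0$, which forces the zero to be unique. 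Your Descartes'-rule argument replaces all of that in one line: the nonzero coefficients of $h_2$, ordered by degree, are negative exactly for $l<M$ and positive exactly for $l>M$, both signs occur because $w\in\Omega$, so there is exactly one sign change and hence (by the parity clause of Descartes' rule) exactly one positive root, counted with multiplicity. Your alternative weighted-mean argument, writing $h_2(t)=t^F B(t)(\phi(t)-M)$ with $\phi$ the mean of $l$ under the weights $\|w_l\|_2^2t^l$ and $t\phi'(t)$ the corresponding variance, is also valid and is closest in spirit to what the paper's derivative computation is secretly doing. One practical remark: the paper later needs $h_2'(r)>0$ at the root to invoke the implicit function theorem and conclude that $r$ depends smoothly on $w$ (in the proof that the orthogonal retraction is well defined), and its proof of the lemma delivers this as a by-product. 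Your Descartes argument yields only that the root is simple, i.e.\ $h_2'(r)\neq 0$, which still suffices for the implicit function theorem; your variance argument recovers the full inequality $h_2'(r)=r^F B(r)\phi'(r)>0$ directly. So nothing is lost, and the uniqueness step is cleaner.
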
 
\begin{proof}
Let $r = \sigma_1^2 / \sigma_0^2 > 0$, then
\be \label{eq:orth-2} \sigma_l = \sigma_0 r^{l/2}, \quad l = F, ..., -F. \ee
Substituting \eqref{eq:orth-2} into \eqref{eq:oretr2} yields 
\be \label{eq:orth-3} h_2(r)=0, \quad 
\sigma_0^2 = \frac{1}{\sum_{l=-F}^{F}\|w_l\|_2^2 r^l}. \ee
Introducing  $h_3(t)=h_2(t) t^{-f_1(w)-F}$, we have
\be h_3(t) := \sum_{l=0}^{f_2(w)-f_1(w)} (l+f_1(w)-M)\|w_{l+f_1(w)}\|_2^2 \cdot t^l,\ee
and
\be h_3(0) = (f_1(w)- M)\|w_{f_1(w)}\|_2^2 < 0, 
\quad \lim_{t\rightarrow +\infty}h_3(t) = +\infty, \ee
which implies the function $h_3(\cdot)$ has at least one positive zero. 

Let $\displaystyle \sum_1$ and $\displaystyle \sum_2$ denote 
$\displaystyle \sum_{1\leq l<M-f_1(w)}$ and 
$\displaystyle \sum_{M-f_1(w)<l\leq f_2(w)-f_1(w)}$ 
respectively. At a zero $r_0$ of $h_3$, we have  
\bea &\sum_1 & l(l+f_1(w)-M)\|w_{l+f_1(w)}\|_2^2 r_0^{l-1}\nonumber \\
&\geq & (M-f_1(w))\sum_1 (l+f_1(w)-M)\|w_{l+f_1(w)}\|_2^2 r_0^{l-1} \nonumber\\
&=& \frac{M-f_1(w)}{r_0}\left[\sum_1 (l+f_1(w)-M)\|w_{l+f_1(w)}\|_2^2 r_0^l - h_3(r_0)\right] \nonumber\\
&=& (M-f_1(w))\left[\frac{(M-f_1(w))\|w_{f_1(w)}\|_2^2}{r_0}-
\sum_2 (l+f_1(w)-M)\|w_{l+f_1(w)}\|_2^2 r_0^{l-1}\right], \nonumber\eea
which leads to 
\bea h_3'(r_0) &=& \sum_1 l(l+f_1(w)-M)\|w_{l+f_1(w)}\|_2^2 r_0^{l-1} 
+ \sum_2 l(l+f_1(w)-M)\|w_{l+f_1(w)}\|_2^2 r_0^{l-1}\nonumber \\
&\geq & \label{orth-h}
\frac{(M-f_1(w))^2\|w_{f_1(w)}\|_2^2}{r_0} + \sum_2 (l+f_1(w)-M)^2\|w_{l+f_1(w)}\|_2^2 r_0^{l-1} 
> 0. \eea
From \eqref{orth-h} we can see that $h_3$ has exactly one positive zero, and
$h_2$  has exactly one positive zero too.
Substituting \eqref{eq:orth-3} into \eqref{eq:orth-2} leads to the formulas of the coefficients. 
\end{proof}

Noticing that in spin-1 cases, $h_2$ degenerates to a quadratic polynomial, and 
the orthogonal retraction has a closed form solution.

The well-definedness of the orthogonal retraction is guaranteed by following theorem
\cite{Absil}:
\begin{theorem}\label{theorem:retr}
Let $\M$ be an embedded manifold of a vector space $\mathcal{E}$ and let $\mathcal{N}$ 
be an 
abstract manifold such that $\dim(\M)+\dim(\mathcal{N})=\dim(\mathcal{E})$. Assume that 
there is a diffeomorphism 
$$\varphi : \M \times \mathcal{N}\rightarrow \mathcal{E}_{\ast}: (u,v)\mapsto \varphi (u,v),$$
where $\mathcal{E}_{\ast}$ is an open subset of $\mathcal{E}$, with a neural element $e$
satisfying 
$$\varphi (u,e) = u, \quad \forall u \in \M.$$
Then the mapping 
$$R_u (\xi_u) := \pi_1 (\varphi^{-1} (u+\xi_u)),$$
where $\pi_1 :\M\times \mathcal{N}\rightarrow \M$ is the projection onto the 
first component, defines a retraction on $\M$.
\end{theorem}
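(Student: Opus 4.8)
The plan is to verify directly that $R_u(\xi_u) := \pi_1(\varphi^{-1}(u+\xi_u))$ meets the two requirements of \cref{def:retr}, after first noting that $R$ is a well-defined smooth map on a neighborhood of the zero section of $T\M$. Smoothness is essentially free: since $\mathcal{E}_{\ast}$ is open and contains $\M$, the point $u+\xi_u$ lies in $\mathcal{E}_{\ast}$ whenever $\xi_u$ is small, and there $R$ is the composition of the smooth maps $\xi_u\mapsto u+\xi_u$, the diffeomorphism $\varphi^{-1}$, and the projection $\pi_1$. Property (i) follows from the neutral-element hypothesis: setting $\xi_u=0_u$ gives $R_u(0_u)=\pi_1(\varphi^{-1}(u))$, and since $\varphi(u,e)=u$ we have $\varphi^{-1}(u)=(u,e)$, hence $R_u(0_u)=\pi_1(u,e)=u$.

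The real content is property (ii), the local rigidity condition $\mathrm{D}R_u(0_u)=\mathrm{id}_{T_u\M}$. Here I would fix $\xi\in T_u\M\simeq T_{0_u}T_u\M$, differentiate the curve $t\mapsto R_u(t\xi)=\pi_1(\varphi^{-1}(u+t\xi))$ at $t=0$, and apply the chain rule together with $\varphi^{-1}(u)=(u,e)$ and $\mathrm{D}(\varphi^{-1})(u)=(\mathrm{D}\varphi(u,e))^{-1}$ to obtain $\mathrm{D}R_u(0_u)[\xi]=\mathrm{D}\pi_1(u,e)\bigl[(\mathrm{D}\varphi(u,e))^{-1}[\xi]\bigr]$. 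To evaluate the inverse differential on $T_u\M$, I would split $\mathrm{D}\varphi(u,e)[\eta,\nu]=\mathrm{D}_1\varphi(u,e)[\eta]+\mathrm{D}_2\varphi(u,e)[\nu]$ for $(\eta,\nu)\in T_u\M\times T_e\mathcal{N}$, and then differentiate the identity $\varphi(u,e)=u$ along $\M$. Because the map $u\mapsto\varphi(u,e)$ is exactly the inclusion $\M\hookrightarrow\mathcal{E}$, its differential $\mathrm{D}_1\varphi(u,e)$ is the inclusion $T_u\M\hookrightarrow\mathcal{E}$, i.e. $\mathrm{D}_1\varphi(u,e)[\eta]=\eta$. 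Consequently $\mathrm{D}\varphi(u,e)[\xi,0]=\xi$ for every $\xi\in T_u\M$, and since $\varphi$ is a diffeomorphism the linear map $\mathrm{D}\varphi(u,e)$ is an isomorphism (consistent with the dimension hypothesis $\dim\M+\dim\mathcal{N}=\dim\mathcal{E}$), so this preimage is unique: $(\mathrm{D}\varphi(u,e))^{-1}[\xi]=(\xi,0)$. Applying $\mathrm{D}\pi_1(u,e)$, which projects onto the first factor, then yields $\mathrm{D}R_u(0_u)[\xi]=\xi$, as required.

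I expect the main obstacle to be the careful bookkeeping around the canonical identification $T_{0_u}T_u\M\simeq T_u\M$ together with the step showing that the $\mathcal{N}$-component of $(\mathrm{D}\varphi(u,e))^{-1}[\xi]$ vanishes. The latter is the crux: it follows cleanly only once one observes that differentiating $\varphi(u,e)=u$ pins down $\mathrm{D}_1\varphi(u,e)$ as the inclusion, so that $(\xi,0)$ already maps to $\xi$ and the invertibility of $\mathrm{D}\varphi(u,e)$ forces the abstract-direction component to drop out; the dimension equality is what guarantees this differential is an isomorphism in the first place.
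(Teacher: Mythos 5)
Your proof is correct; note that the paper itself offers no proof of this statement, quoting it directly from \cite{Absil}, so there is nothing internal to compare against. Your argument --- smoothness by composition, property (i) from $\varphi^{-1}(u)=(u,e)$, and local rigidity by observing that differentiating $\varphi(u,e)=u$ forces $\mathrm{D}_1\varphi(u,e)$ to be the inclusion $T_u\M\hookrightarrow\mathcal{E}$ so that invertibility of $\mathrm{D}\varphi(u,e)$ pins down $(\mathrm{D}\varphi(u,e))^{-1}[\xi]=(\xi,0)$ --- is exactly the standard proof of this result in the cited reference.
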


\begin{lemma}
The orthogonal retraction is a well-defined retraction on $\M$.
\end{lemma}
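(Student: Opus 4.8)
The plan is to invoke Theorem~\ref{theorem:retr} with $\mathcal{E}=\R^N$, taking $\M$ as in \eqref{dcon} and constructing a two-dimensional abstract manifold $\mathcal{N}$ together with a diffeomorphism $\varphi$ whose first-component projection reproduces the map $\psi$ of Lemma~\ref{lemma:r2}. Since $\dim(\M)=N-2$, the dimension bookkeeping forces $\dim(\mathcal{N})=2$, which matches exactly the two free scaling parameters $(\sigma_0,r)$ that appear in Lemma~\ref{lemma:r2}. Concretely I would take $\mathcal{N}=(0,+\infty)\times(0,+\infty)$ with neutral element $e=(1,1)$ and define
\be
\varphi:\M\times\mathcal{N}\to\R^N,\qquad \varphi(u,(\sigma_0,r))_l=\frac{1}{\sigma_0\, r^{l/2}}\,u_l,\quad l=F,\ldots,-F,
\ee
so that $\varphi(u,e)=u$ for every $u\in\M$, as the theorem requires. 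A point $w$ lies in the image of $\varphi$ exactly when it can be rescaled componentwise by positive factors of the geometric form prescribed by \eqref{eq:orth} into a point of $\M$; since $u\in\M$ satisfies \eqref{eq:noneq6}, every image point keeps nonzero mass both below and above $M$, and Lemma~\ref{lemma:r2} conversely produces such a decomposition for each $w\in\Omega$. Hence I expect the image to be exactly $\mathcal{E}_*=\Omega$, which is open in $\R^N$ and contains $\M$.

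The substance of the argument is to verify that $\varphi$ is a diffeomorphism onto $\Omega$. Smoothness of $\varphi$ itself is immediate, since $r^{l/2}$ and $\sigma_0^{-1}$ are smooth for $(\sigma_0,r)\in\mathcal{N}$. Bijectivity follows from Lemma~\ref{lemma:r2}: the existence of the decomposition gives surjectivity onto $\Omega$, while the uniqueness of the positive zero $r$ of $h_2$ (and then of $\sigma_0$) gives injectivity, so that the inverse $\varphi^{-1}(w)=\bigl(\psi(w),(\sigma_0(w),r(w))\bigr)$ is well defined on $\Omega$. The only delicate point, and what I expect to be the main obstacle, is the smoothness of $\varphi^{-1}$, i.e. the smooth dependence of $r$, $\sigma_0$ and $\psi(w)$ on $w$.

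For that step I would apply the implicit function theorem to the defining relation $h_2(r,w)=0$. The subtlety is that $f_1(w)$ and $f_2(w)$ jump as components of $w$ vanish, so one should \emph{not} work with the truncated function $h_3$ from the proof of Lemma~\ref{lemma:r2}; instead I would keep the full polynomial $h_2(t)=\sum_{l=0}^{2F}(l-F-M)\|w_{l-F}\|_2^2\,t^l$, whose coefficients $\|w_{l-F}\|_2^2$ are genuinely smooth (in fact polynomial) in $w$ on all of $\R^N$. The derivative estimate \eqref{orth-h} already shows that the unique positive root is simple, so $\partial_t h_2(r,w)\neq 0$ there; the implicit function theorem then yields a smooth local root, which by the uniqueness of the positive zero coincides with $r(w)$ throughout $\Omega$. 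Smoothness of $\sigma_0^2=1/\sum_l\|w_l\|_2^2\,r^l$ and of $\psi(w)_l=\sqrt{r^l/\sum_k\|w_k\|_2^2\,r^k}\;w_l$ then follows by composition.

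Finally, with $\varphi$ established as a diffeomorphism from $\M\times\mathcal{N}$ onto the open set $\Omega\supset\M$ satisfying $\varphi(u,e)=u$, Theorem~\ref{theorem:retr} guarantees that $R_u(\xi_u):=\pi_1(\varphi^{-1}(u+\xi_u))$ is a retraction on $\M$; its domain in each $T_u\M$ is the neighborhood of $0_u$ mapped into $\Omega$, which is nonempty because $\Omega$ is open. Since $\pi_1(\varphi^{-1}(w))=\psi(w)$ by construction, this $R$ coincides with the orthogonal retraction $R_u(\xi_u)=\psi(u+\xi_u)$ of \eqref{eq:oretr0}, which completes the proof. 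As noted, the crux is the smoothness of $\varphi^{-1}$, and in particular circumventing the discontinuity of $f_1$ and $f_2$ by retaining the full polynomial $h_2$ with its globally smooth coefficients.
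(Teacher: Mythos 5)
Your proposal is correct and follows essentially the same route as the paper: both invoke Theorem~\ref{theorem:retr} with $\mathcal{N}=\R_{+}^2$, the map $\varphi(u,(\sigma_0,r))_l=\sigma_0^{-1}r^{-l/2}u_l$ onto $\Omega$, bijectivity from Lemma~\ref{lemma:r2}, and the implicit function theorem applied to $h_2$ (using $h_2'(r)=h_3'(r)\,r^{f_1(w)+F}>0$ at the root) to get smoothness of $\varphi^{-1}$. Your remark about working with $h_2$ rather than $h_3$ to avoid the discontinuity of $f_1,f_2$ is a sensible clarification, but it does not change the argument.
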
 
\begin{proof}
Take $\mathcal{N}=\R_{+}^2$,  and $\mathcal{N}$ is a manifold satisfying  
$\dim(\M)+\dim(\mathcal{N})=\dim(\R^N)$. Define the mapping 
$\varphi : \M \times \mathcal{N}\rightarrow \Omega$ as 
\be \varphi(u, v) := \left( \frac{1}{v_1 v_2^{l/2}} \cdot u_l \right)_{l=-F}^F , \quad \forall (u,v)\in\M\times
\mathcal{N}. \ee
Lemma \ref{lemma:r2} shows 
for any $w\in\Omega$ there exists a unique $u=\psi (w), v = (\sigma_0, r)^T$ such that 
$\varphi(u,v)=w$, thus $\varphi$ is a bijection. It is obvious to see that $\varphi$ is smooth 
on $\M\times\mathcal{N}$, and $\varphi (u,\mathbf{1})=u, ~\forall u\in\M$. 

From Lemma \ref{lemma:r2}, we have 
\be \varphi^{-1}(w) = \left( \left( \sigma_0 r^{l/2}w_l \right)_{l=-F}^F, ~(\sigma_0, r)^T\right), \quad \forall w\in \Omega, \ee
where $\sigma_0 = \sqrt{\frac{1}{\sum_{l=-F}^{F}\|w_l\|_2^2 r^l}}$ and $r$ is characterized by 
the equation
\be h_2(r) = \sum_{l=0}^{2F} (l-F-M)\|w_{l-F}\|_2^2 \cdot r^l = 0. \ee 
Since $h_2'(r)=(h_3(t)\cdot t^{f_1(w)+F})'|_{t=r}=h_3'(r)\cdot r^{f_1(w)+F} > 0$, 
it follows from the implicit function theorem that $r$, when considered as a function of $w$,
is smooth. Then $\varphi^{-1}$ is also a smooth function at every $w\in\Omega$, which makes
 $\varphi$ a diffeomorphism. Thus the orthogonal retraction, given by 
 \be R_u (\xi_u) := \pi_1 (\varphi^{-1} (u+\xi_u)), \ee
 is a retraction on $\M$.
\end{proof}

\subsection{Closed-form Retraction}
\label{sect:close}

In the projective retraction, the coefficients take  the form 
\be \sigma_l = \frac{1}{1-\mu -l\lambda}. \ee
When $w \rightarrow \M$, we have $\sigma_l \rightarrow 1$ and $\mu , \lambda \rightarrow 0$,
and
\be \frac{1}{1-\mu -l\lambda} = \sqrt{1+2\mu +2l\lambda} + \mathrm{o}(\mu +l\lambda). \ee
Thus we can approximate the projective retraction by taking 
\be \label{eq:retr3} \sigma_l \approx\sqrt{1+2\mu +2l\lambda}, \quad l = F, ..., -F. \ee
As shown below,  \eqref{eq:retr3} has a closed-form formula, and the mapping $R$ characterized by \eqref{eq:oretr0}, 
\eqref{eq:oretr1}, \eqref{eq:oretr2} and \eqref{eq:retr3} is called the 
\textit{closed-form retraction} in this paper.
Firstly, we introduce 
\be 
S := \left\{ w\in \R^N \mid w^T \Gamma^2 w-(M+l)w^T \Gamma w + M l \cdot w^T w > 0, l = \pm F 
\right\}. 
\ee
Apparently, $S$ is an open set and $\M \subset S \subset \Omega$. We next discuss the 
computation of $\psi (w)$ for $w\in S$.
\begin{lemma} \label{lemma:r3}
For an arbitrary point $w\in S$, $\psi(w)$ defined by \eqref{eq:oretr1}, 
\eqref{eq:oretr2} and \eqref{eq:retr3} reads 
\be \psi(w)_l = \sqrt{\frac{w^T \Gamma^2 w-(M+l)w^T \Gamma w + M l \cdot w^T w}
{w^T w\cdot w^T \Gamma^2 w - (w^T \Gamma w)^2}}\cdot w_l ,\quad 
l = F, ..., -F. \label{eq:retract3}\ee
\end{lemma}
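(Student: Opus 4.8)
The plan is to pin down the two scalars $\mu$ and $\lambda$ implicit in the definition \eqref{eq:retr3} using the two normalization constraints \eqref{eq:oretr2}, and then substitute the resulting values back into $\sigma_l^2 = 1+2\mu+2l\lambda$. The first step is to record the moment identities that convert the weighted sums into quadratic forms in $\Gamma$: because $\Gamma=\diag(FI_{2n},\ldots,-FI_{2n})$, every $w\in\R^N$ satisfies $\sum_{l}\|w_l\|_2^2 = w^Tw$, $\sum_l l\|w_l\|_2^2 = w^T\Gamma w$, and $\sum_l l^2\|w_l\|_2^2 = w^T\Gamma^2 w$. These let me rewrite the constraints purely in terms of the three quadratic forms $w^Tw$, $w^T\Gamma w$, $w^T\Gamma^2 w$.

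Inserting $\sigma_l^2 = 1+2\mu+2l\lambda$ into \eqref{eq:oretr2} and applying the identities yields the linear system
\[
\begin{pmatrix} w^Tw & w^T\Gamma w \\ w^T\Gamma w & w^T\Gamma^2 w\end{pmatrix}
\begin{pmatrix} 2\mu \\ 2\lambda\end{pmatrix}
=\begin{pmatrix} 1-w^Tw \\ M-w^T\Gamma w\end{pmatrix}.
\]
Its determinant is $w^Tw\cdot w^T\Gamma^2 w-(w^T\Gamma w)^2$, which I would show is strictly positive by the Cauchy--Schwarz inequality applied to the weights $\|w_l\|_2^2$, exactly as in the proof of Lemma~\ref{lemma:px}; the inequality is strict because $w\in S\subset\Omega$ forces nonzero components with $l<M$ and with $l>M$, so the support contains at least two distinct values of $l$. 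The system is therefore uniquely solvable, and Cramer's rule gives closed forms for $2\mu$ and $2\lambda$.

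Substituting these back into $\sigma_l^2=1+2\mu+2l\lambda$ and putting everything over the common denominator $w^Tw\cdot w^T\Gamma^2 w-(w^T\Gamma w)^2$, the $w^Tw\cdot w^T\Gamma^2 w$ and $(w^T\Gamma w)^2$ contributions cancel, and the numerator collapses to $w^T\Gamma^2 w-(M+l)w^T\Gamma w+Ml\cdot w^Tw$. Taking the positive square root reproduces \eqref{eq:retract3} and hence $\psi(w)_l=\sigma_l w_l$. This part is routine algebra once the moment identities are in place.

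The one genuinely nontrivial point, which I expect to be the crux of the argument, is well-definedness: I must verify $\sigma_l^2>0$ for \emph{every} $l\in\{-F,\ldots,F\}$, whereas the set $S$ only imposes numerator positivity at the two endpoints $l=\pm F$. The key observation is that the numerator $g(l):=w^T\Gamma^2 w-(M+l)w^T\Gamma w+Ml\cdot w^Tw=(w^T\Gamma^2 w-Mw^T\Gamma w)+l\,(Mw^Tw-w^T\Gamma w)$ is \emph{affine} in $l$. An affine function that is positive at both endpoints $l=\pm F$ is positive on all of $[-F,F]$, hence at every intermediate integer; combined with the strictly positive determinant, this guarantees each $\sigma_l$ is a well-defined positive real and completes the proof.
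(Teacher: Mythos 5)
Your proposal is correct and follows essentially the same route as the paper: substitute $\sigma_l^2=1+2\mu+2l\lambda$ into the two constraints, solve the resulting $2\times 2$ linear system for $\mu,\lambda$, and substitute back to obtain \eqref{eq:retract3}. The only difference is that you explicitly justify two facts the paper merely asserts --- strict positivity of the determinant $w^Tw\cdot w^T\Gamma^2 w-(w^T\Gamma w)^2$ via Cauchy--Schwarz, and positivity of $1+2\mu+2l\lambda$ for all intermediate $l$ via the affine-in-$l$ observation --- both of which are correct and welcome additions.
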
 
\begin{proof}
Substituting $\eqref{eq:retr3}$ into $\eqref{eq:oretr2}$ yields 
\begin{equation}
\begin{cases}
(1 + 2\mu)\, w^T w+ 2\lambda w^T \Gamma w=1, \\
(1 + 2\mu)\, w^T \Gamma w + 2\lambda w^T \Gamma^2 w=M,
\end{cases}
\end{equation}
and the solution is given by
\be \label{eq:r3-2} \mu = \frac{1}{2}\cdot \frac{w^T \Gamma^2 w-M w^T \Gamma w}{w^T w\cdot w^T \Gamma^2 w 
- (w^T \Gamma w)^2}-\frac{1}{2} , \quad
\lambda = \frac{1}{2}\cdot \frac{-w^T \Gamma w+M w^T w}{w^T w\cdot w^T \Gamma^2 w - (w^T \Gamma w)^2}. \ee
The condition $w\in S$ ensures $1+2\mu + 2l \lambda > 0$ for $l=F, ..., -F$.  In view of the retraction \eqref{eq:retr3},
we obtain the formula \eqref{eq:retract3}.
\end{proof}

\begin{lemma} \label{lemma:r3-2}
The closed-form retraction is a well-defined retraction.
\end{lemma}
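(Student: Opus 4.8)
The plan is to verify directly the two defining properties of Definition \ref{def:retr}, using the closed-form expression for $\psi$ supplied by Lemma \ref{lemma:r3}. Writing $\psi(w)_l = \sigma_l(w)\,w_l$ with $\sigma_l(w)^2 = N_l(w)/D(w)$, where $N_l(w) = w^T\Gamma^2 w - (M+l)w^T\Gamma w + Ml\,w^Tw$ and $D(w) = w^Tw\cdot w^T\Gamma^2 w - (w^T\Gamma w)^2$, I would first establish that $\psi$ is a well-defined smooth map from the open set $S$ into $\M$, and then check (i) the centering condition $R_u(0_u)=u$ and (ii) the local rigidity $\mathrm{D}R_u(0_u)=\mathrm{id}_{T_u\M}$. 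Since $R_u(\xi_u)=\psi(u+\xi_u)$ and $S$ is open with $\M\subset S$, smoothness of $R$ on a neighborhood of the zero section of $T\M$ follows immediately from smoothness of $\psi$ on $S$. (An alternative is to recast the two coefficients $(\mu,\lambda)$ of \eqref{eq:r3-2} as a chart over an open subset of $\R^2$ and invoke Theorem \ref{theorem:retr} as for the orthogonal retraction, but the explicit formula makes the direct route cleaner.)

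For well-definedness, the key observation is that $N_l(w) = (w^T\Gamma^2 w - Mw^T\Gamma w) + l(Mw^Tw - w^T\Gamma w)$ is affine in $l$. Since $w\in S$ guarantees $N_l(w)>0$ at the two endpoints $l=\pm F$, an affine function positive at both ends of $[-F,F]$ stays positive for every $l\in\{-F,\ldots,F\}$. The denominator $D(w) = (\sum_l\|w_l\|_2^2)(\sum_l l^2\|w_l\|_2^2) - (\sum_l l\|w_l\|_2^2)^2$ is strictly positive on $S\subset\Omega$ by the same Cauchy-Schwarz argument used in Lemma \ref{lemma:px}, because membership in $\Omega$ forces at least two indices $l$ with $w_l\neq 0$. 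Hence each $\sigma_l(w)=\sqrt{N_l(w)/D(w)}$ is a positive smooth function on $S$, so $\psi$ is smooth, and by the construction in Lemma \ref{lemma:r3} the coefficients solve \eqref{eq:oretr2}, so $\psi(w)\in\M$.

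Property (i) is immediate: for $u\in\M$ I would substitute $u^Tu=1$ and $u^T\Gamma u=M$, which collapses both numerator and denominator to the common value $u^T\Gamma^2 u - M^2>0$, so $\sigma_l(u)=1$ for every $l$ and therefore $\psi(u)=u=R_u(0_u)$.

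The main obstacle is property (ii), which carries the computational weight. Fixing $\xi\in T_u\M$, I would differentiate $\psi(u+t\xi)_l = \sigma_l(u+t\xi)(u_l+t\xi_l)$ at $t=0$ to get $\xi_l + \dot\sigma_l(0)\,u_l$, so local rigidity reduces to showing $\dot\sigma_l(0)=0$ for all $l$. Computing the directional derivatives of the quadratic forms at $u$ and invoking the tangent-space conditions $u^T\xi=0$ and $u^T\Gamma\xi=0$ from \eqref{eq:tspace} to kill the cross terms, one finds that both $N_l$ and $D$ have the same directional derivative $2u^T\Gamma^2\xi$. Since at $u$ one has $N_l=D=u^T\Gamma^2 u - M^2$ and $\sigma_l=1$, the chain rule gives $\dot\sigma_l(0) = (\dot N_l(0)-\dot D(0))/(2(u^T\Gamma^2 u - M^2)) = 0$, whence $\mathrm{D}R_u(0_u)[\xi]=\xi$. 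The delicate point is exactly this cancellation: it works only because the tangency constraints annihilate the $u^T\xi$ and $u^T\Gamma\xi$ contributions, leaving the $u^T\Gamma^2\xi$ terms to match between numerator and denominator.
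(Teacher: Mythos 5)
Your proof is correct, but it takes a genuinely different route from the paper's. The paper does not verify the two retraction axioms directly; instead it constructs the map $\varphi:\M\times\mathcal{N}\to S$, $\varphi(u,v)=\bigl(u_l/\sqrt{1+2v_1+2lv_2}\bigr)_{l=-F}^{F}$ with $\mathcal{N}=\{v\in\R^2 : 1+2v_1\pm 2Fv_2>0\}$, shows via a case analysis on $v_2$ that $\varphi$ maps into $S$, uses Lemma \ref{lemma:r3} to get bijectivity onto $S$, and then invokes the general diffeomorphism criterion of Theorem \ref{theorem:retr} (so the local rigidity condition \eqref{eq:localrigidity} comes for free from that theorem rather than from a computation). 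You instead check Definition \ref{def:retr} by hand: the observation that $N_l(w)$ is affine in $l$ and positive at $l=\pm F$ (hence positive for all $l$), the Cauchy--Schwarz positivity of $D(w)$ on $S\subset\Omega$, the collapse $N_l(u)=D(u)=u^T\Gamma^2u-M^2$ giving $\sigma_l(u)=1$ and property (i), and the cancellation $\dot N_l(0)=\dot D(0)=2u^T\Gamma^2\xi$ forced by $u^T\xi=u^T\Gamma\xi=0$ giving $\dot\sigma_l(0)=0$ and property (ii) -- all of these steps check out. Your approach is more elementary and self-contained (it avoids the external theorem and makes explicit exactly why local rigidity holds), at the cost of the derivative computation; the paper's approach is shorter on computation but leans on Theorem \ref{theorem:retr} and on the somewhat glossed-over assertion that $\varphi$ and $\varphi^{-1}$ are smooth, which ultimately rests on the same positivity facts you verify. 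Both are valid proofs of the lemma.
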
 
\begin{proof}
Denote \be \mathcal{N} = \left\{ v=(v_1,v_2)^T\in \R^2 \mid 1+2v_1 + 2F v_2 > 0, ~
1+2v_1 - 2F v_2 > 0 \right\}. \ee
$\mathcal{N}$ is an open subset of $\R^2$ and therefore a 2-dimensional manifold.
Define the mapping $\varphi : \M \times \mathcal{N} \rightarrow \R^N$ as 
\be \varphi (u,v) := \left(  \frac{1}{\sqrt{1+2v_1 +2l v_2}}\cdot u_l \right)_{l=-F}^F, 
\quad \forall (u,v) \in \M \times \mathcal{N}. \ee

For an arbitrary point $(u,v)\in \M \times \mathcal{N}$, let $w=\varphi (u,v)$.
\begin{itemize}

\item If $v_2 = 0$, then $w = \frac{1}{\sqrt{1+2v_1}} u$ ($u\in\M$), and 
\bea && w^T \Gamma^2 w-(M+l)w^T \Gamma w + M l \cdot w^T w\nonumber \\
=&& \frac{1}{1+2v_1} [u^T \Gamma^2 u-(M+l)u^T \Gamma u + M l \cdot u^T u\nonumber\\
=&& \frac{1}{1+2v_1} (u^T \Gamma^2 u-M^2) > 0,\quad l=\pm F. \nonumber\eea

\item If $v_2 \neq 0$, then 
\bea && w^T \Gamma^2 w-(M+l)w^T \Gamma w + M l \cdot w^T w\nonumber \\
=&& \sum_{k=-F}^F \frac{k^2-(M+l)k+M l}{1+2v_1+2k v_2}\|u_k\|_2^2\nonumber \\
=&& \sum_{k=-F}^F \left[\frac{k}{2v_2}-\frac{M}{2v_2} +
\frac{(1+2v_1 + 2l v_2)(M-k)}{2v_2 (1+2v_1 + 2k v_2)}\right] \|u_k\|_2^2 \nonumber\\
=&& \frac{1}{2v_2}u^T \Gamma u - \frac{M}{2v_2}u^T u 
+ \sum_{k=-F}^F \frac{(1+2v_1 + 2l v_2)(M-k)}{2v_2 (1+2v_1 + 2k v_2)} \|u_k\|_2^2\nonumber \\
=&& \frac{1+2v_1 + 2l v_2}{2} \sum_{k=-F}^F \frac{M-k}{v_2(1+2v_1 + 2k v_2)} \|u_k\|_2^2 ,
 \quad l=\pm F.\nonumber \eea
Noticing that 
\be \small \frac{M-k}{v_2(1+2v_1 + 2k v_2)} - \frac{M-k}{v_2(1+2v_1 +2M v_2)} = \frac{2(M-k)^2}{(1+2v_1 + 2k v_2)(1+2v_1 +2M v_2)} > 0, \ee
we have 
\bea && w^T \Gamma^2 w-(M+l)w^T \Gamma w + M l \cdot w^T w \nonumber\\
>&& \frac{1+2v_1 + 2l v_2}{2} \sum_{k=-F}^F \frac{M-k}{v_2(1+2v_1 + 2M v_2)} \|u_k\|_2^2 \nonumber\\
=&& \frac{1+2v_1 + 2l v_2}{2v_2(1+2v_1 + 2M v_2)}(M u^T u - u^T \Gamma u) 
= 0, \quad l=\pm F. \nonumber\eea
\end{itemize}   

On one hand, above analysis shows $\varphi (\M\times \mathcal{N})\subset S$; 
on the other hand, Lemma 
\ref{lemma:r3} indicates that for any $w\in S$, there exists a unique $u=\psi (w), 
v=(\mu, \lambda)^T$ such that $\varphi (u,v)=w$. Hence $\varphi$ is a bijection from 
$\M \times \mathcal{N}$ to $S$. It is straightforward to see that $\varphi$ and $\varphi^{-1}$ 
are both smooth functions, and $\varphi(u,\mathbf{0})=u, \forall u\in \M$. Thus from 
Theorem \ref{theorem:retr} we know that the closed-form retraction, given by 
\be R_u(\xi_u) := \pi_1 (\varphi^{-1}(u+\xi_u)), \ee
is a retraction on $\M$.
\end{proof}

\section{Numerical Experiments}
\label{sect:result}

In this section, we first compare the performance of Algorithm \ref{alg:ARNT} with 
RGBB and the Riemannian trust region method (RTR) \cite{AbsilRTR} 
by testing some BEC examples. RGBB and RTR are also expedited with the mesh 
refinement technique.
We present numerical results of these algorithms under 
the three different retractions defined in Section \ref{sect:retr}. 
Then we apply Algorithm \ref{alg:ARNT} to compute the ground states of 
spin-2 and spin-3 BEC with different parameters. 
All codes are written in MATLAB. All experiments were performed on a workstation with 
Intel Xenon E5-2680 v3 processors at 2.50GHz($\times 12$) and 128GB memory 
running CentOS 6.8 and MATLAB R2018b.

In the  spin-1 BEC, the initial data is chosen as $\Phi_0(\bx)=U\phi_0(\bx)$, where 
\be
\phi_0(\bx) = \frac1{\pi^{d/4}} e^{-(x_1^2+\cdots+x_d^2)/2},\quad \bx\in\R^d,
\ee
 $U=\left(\frac{\sqrt{1+3M}}{2}, \sqrt{\frac{1-M}{2}}, \frac{\sqrt{1-M}}{2}\right)^T$ for the ferromagnetic interaction ($\beta_1\leq 0$);
$U=\left(\sqrt{\frac{1+M}{2}}, 0, \sqrt{\frac{1-M}{2}}\right)^T$ for the antiferromagnetic interaction ($\beta_1>0$) \cite{Bao1}.

In the spin-2 BEC, the initial data is chosen as $\Phi_0(\bx)=U\phi_0(\bx)$, where 
\be
U=\left(\frac{m_1^4}{16}, \frac{m_1^3m_2}{8}, \frac{\sqrt{6}m_1^2m_2^2}{16}, \frac{m_1m_2^3}{8}, \frac{m_2^4}{16}\right)^T
\ee
with $m_1=\sqrt{2+M}$ and $m_2=\sqrt{2-M}$ for the ferromagnetic interaction ($\beta_1<0$ and $\beta_2>20\beta_1$).
And $U=\left(\frac{\sqrt{2+M}}{2}, 0,0,0, \frac{\sqrt{2-M}}{2}\right)^T$ for the nematic interaction ($\beta_2<0$ and $\beta_2<20\beta_1$), $U=\left(\sqrt{\frac{M+1}{3}}, 0,0, \sqrt{\frac{2-M}{3}}, 0\right)^T$ for the cyclic interaction ($\beta_1>0$ and $\beta_2>0$) \cite{Bao-Tang-Yuan}.
In the spin-3 BEC, the initial data is chosen as $\Phi_0(\bx)=U\phi_0(\bx)$, where $U\in\R^7$ is taken as the random vector.
In all the examples, we take $p=q=0$.

\subsection{Performance of algorithms}
\label{sect:perform}

In RGBB we used all of the default parameters. As for RTR, we added a rule 
$\| r_{j+1}\|_2 \leq \min \{ 0.1, 0.1\| r_0 \|_2\}$ into the stopping criterion of 
the truncated CG method. All other default settings of RTR were used. 
For ARNT, we set $\eta_1 = 0.01, \eta_2 = 0.9, \gamma_0 = 0.2, \gamma_1 = 1, 
\gamma_2 = 10$, and $\sigma_k = \hat{\sigma}_k\|\mathrm{grad}\tilde E(u_k)\|_2$, 
where $\hat{\sigma}_k$ is updated by the procedure in Algorithm \ref{alg:ARNT} with 
$\hat{\sigma}_0 = 1$. 
Furthermore, when an estimation 
of the absolute value of the negative curvature, denoted by $\sigma_{est}$, is available at 
the $k$-th subproblem, we can calculate 
\be \sigma_{k+1}^{new} = \max \{ \sigma_{k+1}, \sigma_{est}+\tilde{\gamma} \}, \ee
with some small $\tilde{\gamma}\geq 0$. Then the parameter $\sigma_{k+1}$ is reset to 
$\sigma_{k+1}^{new}$. 

On the finest mesh, all algorithms terminate when either $\| \mathrm{grad}\tilde E(u_k)\|_2 \leq 
10^{-6}$ or the number of iterations reaches 10000, while on the coarse meshes they all 
terminate when $\| \mathrm{grad}E(x_k)\|_2 \leq 10^{-5}$. 
In the implementation of ARNT, RGBB stops when either 
$\| \mathrm{grad}\tilde E(u_k)\|_2 \leq 10^{-2}$ or the number of iterations 
reaches 2000. The maximum number of inner iterations in ARNT is chosen adaptively 
depending on $\| \mathrm{grad}\tilde E(u_k)\|_2$.

In the subsequent tables, the columns ``f", ``nrmG" and ``time" display the final objective 
function value, the final norm of the Riemannian gradient and the total CPU time each 
algorithm spent to reach the stopping criterion. The column ``iter" reports the number 
of iterations (the average numbers of inner iterations) on the finest mesh. 
The choice of retractions is shown in the column 
``retr", where R1, R2 and R3 denote the projective retraction, the orthogonal retraction and 
the closed-form retraction, respectively. 

We present results of following cases for spin-1, spin-2 and spin-3 BEC: 
\begin{itemize}
\item Spin-1 BEC \cite{Bao1}
	\begin{itemize}
	\item[~] 2D. Antiferromagnetic case. $V(x,y) = \frac{1}{2} (x^2 + y^2) + 10[\sin^2 (\frac{\pi x}{4})+\sin^2 
	(\frac{\pi y}{4})]$, $\beta_0 = 300$, $\beta_1 = 100$, $U = [-16, 16]\times[-16, 16], 
	n = 2^9.$
	\item[~] 3D. Ferromagnetic case. $V(x,y,z)=\frac{1}{2} \sum_{\alpha=x,y,z}\left(\alpha^2+200\sin^2 (\frac{\pi \alpha}{2})\right)$, $\beta_0 = 880$, 
	$\beta_1 = -4.1$, $U = [-16, 16]\times[-16, 16]\times[-16, 16], n = 2^8.$
	\end{itemize}
\item Spin-2 BEC \cite{Gautam}
	\begin{itemize}
	\item[~] 2D. Antiferromagnetic case. $V(x,y)=\frac{1}{2}(x^2+ y^2)+10\left[\sin^2 (\frac{\pi x}{2})+\sin^2 
	(\frac{\pi y}{2})\right]$, $\beta_0 = 243$, $\beta_1 = 12.1$, $\beta_2 = -13$, 
	$U = [-8, 8]\times [-8, 8], n = 2^8.$
	\item[~] 3D. Cyclic case. $V(x,y,z)=\frac{1}{2}(x^2+y^2+z^2)+100\left[\sin^2 (\frac{\pi x}{2})+
	\sin^2 (\frac{\pi y}{2})+\sin^2(\frac{\pi z}{2})\right]$, $\beta_0 = 183.9$, 
	$\beta_1 = 26.8$, $\beta_2 = 134.7$, $U = [-16, 16]\times [-16, 16]\times [-16, 16], 
	n = 2^8.$
	\end{itemize}
\item Spin-3 BEC
	\begin{itemize}
	\item[~] 2D. $V(x,y)=\frac{1}{2}(x^2+ y^2)+10\left[\sin^2 (\frac{\pi x}{2})+
	\sin^2 (\frac{\pi y}{2})\right]$, $\beta_0 = 100$, $\beta_1 = 1, \beta_2 = 10, 
	\beta_3 = 1$, $U = [-8, 8]\times [-8, 8], n = 2^8.$
	\item[~] 3D. $V(x,y,z)=\frac{1}{2}(x^2+y^2+z^2)+100\left[\sin^2 (\frac{\pi x}
	{2})+\sin^2 (\frac{\pi y}{2})+\sin^2(\frac{\pi z}{2})\right]$, $\beta_0 = 100$, 
	$\beta_1 = 1, \beta_2 = 10, \beta_3 = 1$, $U = [-8, 8]\times [-8, 8]\times 
	[-8, 8], n = 2^7.$
	\end{itemize}	
\end{itemize}

\setlength{\tabcolsep}{1.6pt}

\begin{table}[htb] 
\centering 
\caption{Numerical results of spin-1 BEC in 2D} 
\begin{tabular}{|c|cccc|cccc|cccc|} 
\hline 
~ & \multicolumn{4}{|c|}{ARNT} & \multicolumn{4}{|c|}{RGBB} & \multicolumn{4}{|c|}{RTR} \\ \hline 
retr& f & nrmG & iter & time& f & nrmG & iter & time& f & nrmG & iter & time \\ \hline 
\multicolumn{13}{|c|}{$M = 0.0$} \\ \hline 
R1 & 15.1032 & 7.8e-07 & 4 (38) & 17.0 & 15.1032 & 8.3e-07 & 239 & 18.1 & 15.1032 & 6.2e-07 & 20 (17) & 31.6 \\ \hline 
R2 & 15.1032 & 7.7e-07 & 4 (38) & 15.3 & 15.1032 & 6.3e-07 & 257 & 15.1 & 15.1032 & 6.3e-07 & 20 (17) & 31.0 \\ \hline 
R3 & 15.1032 & 4.4e-07 & 4 (35) & 15.9 & 15.1032 & 3.3e-07 & 255 & 15.9 & 15.1032 & 6.2e-07 & 20 (17) & 28.9 \\ \hline 
\multicolumn{13}{|c|}{$M = 0.2$} \\ \hline 
R1 & 15.1411 & 6.6e-07 & 4 (44) & 19.9 & 15.1411 & 9.3e-07 & 258 & 21.5 & 15.1411 & 2.6e-07 & 21 (26) & 54.9 \\ \hline 
R2 & 15.1411 & 8.1e-07 & 4 (42) & 18.6 & 15.1411 & 1.0e-06 & 254 & 15.1 & 15.1411 & 2.7e-07 & 21 (26) & 51.2 \\ \hline 
R3 & 15.1411 & 7.6e-07 & 4 (42) & 18.6 & 15.1411 & 7.9e-07 & 261 & 18.2 & 15.1411 & 2.6e-07 & 21 (26) & 50.8 \\ \hline 
\multicolumn{13}{|c|}{$M = 0.5$} \\ \hline 
R1 & 15.3436 & 6.7e-07 & 4 (64) & 27.2 & 15.3436 & 8.1e-07 & 431 & 36.4 & 15.3436 & 2.1e-07 & 21 (31) & 61.2 \\ \hline 
R2 & 15.3436 & 3.5e-07 & 4 (67) & 24.9 & 15.3436 & 9.1e-07 & 421 & 26.4 & 15.3436 & 1.9e-07 & 21 (31) & 61.4 \\ \hline 
R3 & 15.3436 & 5.3e-07 & 4 (64) & 25.8 & 15.3436 & 9.0e-07 & 429 & 26.9 & 15.3436 & 1.9e-07 & 21 (31) & 63.5 \\ \hline 
\multicolumn{13}{|c|}{$M = 0.9$} \\ \hline 
R1 & 15.9621 & 7.7e-07 & 4 (51) & 23.8 & 15.9621 & 8.8e-07 & 323 & 28.4 & 15.9621 & 3.9e-07 & 21 (26) & 54.3 \\ \hline 
R2 & 15.9621 & 7.5e-07 & 4 (52) & 21.7 & 15.9621 & 1.0e-06 & 279 & 18.1 & 15.9621 & 3.9e-07 & 21 (26) & 51.3 \\ \hline 
R3 & 15.9621 & 5.3e-07 & 4 (55) & 23.5 & 15.9621 & 7.6e-07 & 483 & 31.8 & 15.9621 & 3.6e-07 & 21 (26) & 51.6 \\ \hline 
\end{tabular} \label{tb:1-1-2}
\end{table} 

\begin{table}[htb] 
\centering 
\caption{Numerical results of spin-1 BEC in 3D} 
\begin{tabular}{|c|cccc|cccc|cccc|} 
\hline 
~ & \multicolumn{4}{|c|}{ARNT} & \multicolumn{4}{|c|}{RGBB} & \multicolumn{4}{|c|}{RTR} \\ \hline 
retr& f & nrmG & iter & time& f & nrmG & iter & time& f & nrmG & iter & time \\ \hline 
\multicolumn{13}{|c|}{$M = 0.0$} \\ \hline 
R1 & 55.4362 & 6.9e-07 & 4 (34) & 1187.6 & 55.4362 & 1.0e-06 & 720 & 7197.9 & 55.4362 & 7.2e-07 & 17 (15) & 1484.0 \\ \hline 
R2 & 55.4362 & 7.5e-07 & 4 (34) & 1042.3 & 55.4362 & 9.5e-07 & 384 & 2872.6 & 55.4362 & 7.2e-07 & 17 (15) & 1306.8 \\ \hline 
R3 & 55.4362 & 5.6e-07 & 4 (35) & 1210.5 & 55.4362 & 7.0e-07 & 227 & 1169.3 & 55.4362 & 7.0e-07 & 17 (15) & 1540.0 \\ \hline 
\multicolumn{13}{|c|}{$M = 0.2$} \\ \hline 
R1 & 55.4362 & 4.2e-07 & 4 (40) & 1568.7 & 55.4363 & 9.0e-05 & 10000 & 47615.4 & 55.4362 & 7.3e-07 & 17 (15) & 1523.8 \\ \hline 
R2 & 55.4362 & 5.8e-07 & 4 (33) & 2041.6 & 55.4363 & 9.8e-05 & 10000 & 42311.5 & 55.4362 & 7.1e-07 & 17 (15) & 1309.3 \\ \hline 
R3 & 55.4362 & 3.2e-07 & 4 (41) & 2626.7 & 55.4363 & 9.1e-05 & 10000 & 48975.6 & 55.4362 & 7.1e-07 & 17 (15) & 1538.8 \\ \hline 
\multicolumn{13}{|c|}{$M = 0.5$} \\ \hline 
R1 & 55.4362 & 5.6e-07 & 4 (41) & 1395.6 & 55.4362 & 8.3e-07 & 201 & 1242.1 & 55.4362 & 7.0e-07 & 17 (15) & 1514.3 \\ \hline 
R2 & 55.4362 & 3.2e-07 & 4 (41) & 1392.8 & 55.4362 & 3.1e-07 & 310 & 1846.4 & 55.4362 & 7.0e-07 & 17 (15) & 1309.3 \\ \hline 
R3 & 55.4362 & 5.7e-07 & 4 (36) & 1394.5 & 55.4362 & 1.0e-06 & 295 & 1603.6 & 55.4362 & 7.2e-07 & 17 (15) & 1532.3 \\ \hline 
\multicolumn{13}{|c|}{$M = 0.9$} \\ \hline 
R1 & 55.4362 & 5.4e-07 & 4 (39) & 1421.1 & 55.4362 & 9.8e-07 & 444 & 4093.6 & 55.4362 & 6.8e-07 & 17 (15) & 1527.9 \\ \hline 
R2 & 55.4362 & 2.9e-07 & 4 (41) & 1323.6 & 55.4362 & 8.7e-07 & 410 & 2916.9 & 55.4362 & 7.2e-07 & 17 (15) & 1297.3 \\ \hline 
R3 & 55.4362 & 6.8e-07 & 4 (34) & 1350.7 & 55.4362 & 9.7e-07 & 236 & 1460.1 & 55.4362 & 6.9e-07 & 17 (15) & 1548.9 \\ \hline 
\end{tabular} \label{tb:1-1-3}
\end{table} 


\begin{table}[htb] 
\centering 
\caption{Numerical results of spin-2 BEC in 2D} 
\begin{tabular}{|c|cccc|cccc|cccc|} 
\hline 
~ & \multicolumn{4}{|c|}{ARNT} & \multicolumn{4}{|c|}{RGBB} & \multicolumn{4}{|c|}{RTR} \\ \hline 
retr& f & nrmG & iter & time& f & nrmG & iter & time& f & nrmG & iter & time \\ \hline 
\multicolumn{13}{|c|}{$M = 0.0$} \\ \hline 
R1 & 14.3386 & 5.9e-07 & 4 (32) & 8.3 & 14.3386 & 9.0e-07 & 280 & 12.1 & 14.3386 & 9.3e-07 & 17 (18) & 18.3 \\ \hline 
R2 & 14.3386 & 5.9e-07 & 4 (32) & 8.4 & 14.3386 & 8.0e-07 & 238 & 9.8 & 14.3386 & 9.3e-07 & 17 (18) & 17.2 \\ \hline 
R3 & 14.3386 & 5.9e-07 & 4 (32) & 8.2 & 14.3386 & 9.0e-07 & 225 & 8.3 & 14.3386 & 9.3e-07 & 17 (18) & 17.2 \\ \hline 
\multicolumn{13}{|c|}{$M = 0.5$} \\ \hline 
R1 & 14.3730 & 3.8e-07 & 4 (59) & 13.5 & 14.3730 & 1.3e-07 & 346 & 14.9 & 14.3730 & 4.7e-07 & 18 (22) & 25.4 \\ \hline 
R2 & 14.3730 & 3.8e-07 & 4 (59) & 13.8 & 14.3730 & 9.9e-07 & 323 & 13.9 & 14.3730 & 4.7e-07 & 18 (22) & 24.3 \\ \hline 
R3 & 14.3730 & 3.8e-07 & 4 (59) & 13.5 & 14.3730 & 9.6e-08 & 523 & 32.1 & 14.3730 & 4.7e-07 & 18 (22) & 23.1 \\ \hline 
\multicolumn{13}{|c|}{$M = 1.5$} \\ \hline 
R1 & 14.6754 & 4.3e-07 & 4 (62) & 13.5 & 14.6754 & 9.5e-07 & 462 & 19.0 & 14.6754 & 2.8e-07 & 18 (28) & 30.5 \\ \hline 
R2 & 14.6754 & 4.3e-07 & 4 (62) & 14.2 & 14.6754 & 9.9e-07 & 519 & 31.8 & 14.6754 & 2.8e-07 & 18 (28) & 28.5 \\ \hline 
R3 & 14.6754 & 4.3e-07 & 4 (62) & 13.8 & 14.6754 & 8.5e-07 & 402 & 19.4 & 14.6754 & 2.8e-07 & 18 (28) & 28.5 \\ \hline 
\end{tabular} \label{tb:1-2-2}
\end{table} 

\begin{table}[htb]
\centering 
\caption{Numerical results of spin-2 BEC in 3D} 
\begin{tabular}{|c|cccc|cccc|cccc|} 
\hline 
~ & \multicolumn{4}{|c|}{ARNT} & \multicolumn{4}{|c|}{RGBB} & \multicolumn{4}{|c|}{RTR} \\ \hline 
retr& f & nrmG & iter & time& f & nrmG & iter & time& f & nrmG & iter & time \\ \hline 
\multicolumn{13}{|c|}{$M = 0.0$} \\ \hline 
R1 & 46.2917 & 5.9e-07 & 4 (36) & 2179.1 & 46.2917 & 4.7e-07 & 358 & 4192.9 & 46.2917 & 3.6e-07 & 18 (15) & 2905.1 \\ \hline 
R2 & 46.2917 & 6.6e-07 & 4 (40) & 2163.8 & 46.2917 & 9.7e-07 & 346 & 3667.8 & 46.2917 & 3.4e-07 & 18 (15) & 2720.1 \\ \hline 
R3 & 46.2917 & 6.5e-07 & 4 (45) & 2534.3 & 46.2917 & 7.7e-07 & 356 & 4219.3 & 46.2917 & 2.7e-07 & 18 (15) & 2937.6 \\ \hline 
\multicolumn{13}{|c|}{$M = 0.5$} \\ \hline 
R1 & 45.7403 & 6.2e-07 & 5 (81) & 4824.5 & 45.7403 & 9.3e-07 & 1129 & 9464.9 & 45.7403 & 5.5e-07 & 19 (29) & 5432.8 \\ \hline 
R2 & 45.7403 & 8.4e-07 & 5 (70) & 4176.5 & 45.7403 & 9.2e-07 & 1050 & 7630.2 & 45.7403 & 4.7e-07 & 19 (28) & 5277.9 \\ \hline 
R3 & 45.7403 & 7.7e-07 & 5 (76) & 4511.5 & 45.7403 & 9.6e-07 & 1296 & 12903.3 & 45.7403 & 5.3e-07 & 19 (28) & 5483.4 \\ \hline 
\multicolumn{13}{|c|}{$M = 1.5$} \\ \hline 
R1 & 46.8619 & 7.2e-07 & 4 (56) & 3032.4 & 46.8619 & 9.8e-07 & 391 & 3182.1 & 46.8619 & 3.3e-07 & 19 (22) & 4371.3 \\ \hline 
R2 & 46.8619 & 6.6e-07 & 4 (56) & 3038.1 & 46.8619 & 8.9e-07 & 380 & 2996.6 & 46.8619 & 3.4e-07 & 19 (22) & 4142.4 \\ \hline 
R3 & 46.8619 & 5.8e-07 & 4 (55) & 3151.1 & 46.8619 & 9.3e-07 & 515 & 6125.7 & 46.8619 & 3.5e-07 & 19 (22) & 4370.7 \\ \hline 
\end{tabular} \label{tb:1-2-3}
\end{table} 

\begin{table}[htb] 
\centering 
\caption{Numerical results of spin-3 BEC in 2D} 
\begin{tabular}{|c|cccc|cccc|cccc|} 
\hline 
~ & \multicolumn{4}{|c|}{ARNT} & \multicolumn{4}{|c|}{RGBB} & \multicolumn{4}{|c|}{RTR} \\ \hline 
retr& f & nrmG & iter & time& f & nrmG & iter & time& f & nrmG & iter & time \\ \hline 
\multicolumn{13}{|c|}{$M = 0.0$} \\ \hline 
R1 & 11.8279 & 6.8e-07 & 4 (58) & 29.8 & 11.8279 & 6.8e-07 & 634 & 50.3 & 11.8279 & 5.2e-07 & 28 (159) & 359.0 \\ \hline 
R2 & 11.8279 & 7.1e-07 & 4 (49) & 24.4 & 11.8279 & 8.6e-07 & 469 & 39.5 & 11.8279 & 1.4e-07 & 18 (58) & 83.2 \\ \hline 
R3 & 11.8279 & 6.8e-07 & 4 (58) & 25.9 & 11.8279 & 8.6e-07 & 496 & 32.9 & 11.8279 & 5.9e-07 & 32 (184) & 476.4 \\ \hline 
\multicolumn{13}{|c|}{$M = 0.5$} \\ \hline 
R1 & 11.8334 & 4.9e-07 & 4 (75) & 32.7 & 11.8334 & 7.8e-07 & 577 & 48.0 & 11.8334 & 1.8e-07 & 18 (79) & 124.2 \\ \hline 
R2 & 11.8334 & 5.2e-07 & 4 (75) & 35.4 & 11.8334 & 8.7e-07 & 472 & 37.7 & 11.8334 & 1.7e-07 & 18 (79) & 127.5 \\ \hline 
R3 & 11.8334 & 5.9e-07 & 4 (73) & 31.0 & 11.8334 & 9.5e-07 & 572 & 40.0 & 11.8334 & 1.9e-07 & 18 (42) & 63.5 \\ \hline 
\multicolumn{13}{|c|}{$M = 1.5$} \\ \hline 
R1 & 11.8780 & 4.0e-07 & 5 (134) & 51.6 & 11.8780 & 1.0e-06 & 2874 & 190.9 & 11.8780 & 3.0e-07 & 18 (51) & 78.6 \\ \hline 
R2 & 11.8780 & 3.6e-07 & 6 (150) & 79.6 & 11.8780 & 9.7e-07 & 857 & 74.9 & 11.8780 & 2.7e-07 & 18 (61) & 86.9 \\ \hline 
R3 & 11.8780 & 6.2e-07 & 4 (95) & 36.5 & 11.8780 & 9.9e-07 & 3123 & 221.6 & 11.8780 & 3.0e-07 & 18 (49) & 69.3 \\ \hline 
\end{tabular} \label{tb:1-3-2}
\end{table} 

\begin{table}[htb] 
\centering 
\caption{Numerical results of spin-3 BEC in 3D} 
\begin{tabular}{|c|cccc|cccc|cccc|} 
\hline 
~ & \multicolumn{4}{|c|}{ARNT} & \multicolumn{4}{|c|}{RGBB} & \multicolumn{4}{|c|}{RTR} \\ \hline 
retr& f & nrmG & iter & time& f & nrmG & iter & time& f & nrmG & iter & time \\ \hline 
\multicolumn{13}{|c|}{$M = 0.0$} \\ \hline 
R1 & 42.9752 & 5.1e-07 & 5 (164) & 4243.2 & 42.9752 & 9.6e-07 & 496 & 1328.1 & 42.9752 & 9.5e-08 & 18 (23) & 794.0 \\ \hline 
R2 & 42.9752 & 5.9e-07 & 4 (92) & 4405.6 & 42.9774 & 1.6e-04 & 10000 & 15255.9 & 42.9752 & 9.9e-08 & 18 (30) & 1027.9 \\ \hline 
R3 & 42.9752 & 9.9e-07 & 40 (154) & 16610.8 & 42.9767 & 2.2e-03 & 10000 & 14740.1 & 42.9752 & 4.7e-07 & 48 (98) & 8453.7 \\ \hline 
\multicolumn{13}{|c|}{$M = 0.5$} \\ \hline 
R1 & 42.9822 & 8.8e-07 & 4 (115) & 1050.1 & 42.9822 & 9.4e-07 & 864 & 1370.7 & 42.9822 & 7.7e-07 & 96 (159) & 27219.4 \\ \hline 
R2 & 42.9822 & 8.8e-07 & 4 (115) & 1050.4 & 42.9822 & 1.0e-06 & 1150 & 1956.0 & 42.9822 & 6.9e-07 & 21 (40) & 1601.1 \\ \hline 
R3 & 42.9822 & 8.9e-07 & 4 (115) & 1013.1 & 42.9822 & 1.0e-06 & 1328 & 2128.7 & 42.9822 & 6.7e-07 & 21 (40) & 1539.8 \\ \hline 
\multicolumn{13}{|c|}{$M = 1.5$} \\ \hline 
R1 & 43.0399 & 2.8e-07 & 5 (151) & 1959.2 & 43.0400 & 4.2e-03 & 10000 & 15199.3 & 43.0399 & 1.2e-07 & 82 (379) & 55201.7 \\ \hline 
R2 & 43.0399 & 2.3e-07 & 5 (156) & 2100.3 & 43.0399 & 2.1e-03 & 10000 & 15261.4 & 43.0417 & 1.6e-07 & 22 (36) & 1479.3 \\ \hline 
R3 & 43.0399 & 2.5e-07 & 5 (151) & 1683.8 & 43.0399 & 9.1e-07 & 1162 & 1802.2 & 43.0417 & 1.6e-07 & 22 (37) & 1485.9 \\ \hline 
\end{tabular} \label{tb:1-3-3}
\end{table} 

The detailed numerical results are reported in Tables \ref{tb:1-1-2}-\ref{tb:1-3-3}. 
In most cases, all three algorithms 
converge to points with the same function 
values. 
For spin-1 and spin-2 cases,
the choice of different retractions has small impact on the numerical performance, and 
the second-order algorithms ARNT and RTR exhibit higher stability than the first-order algorithm RGBB in response to the change of retractions. 
In the 3D case of spin-3 BEC (Table \ref{tb:1-3-3}), RTR converges to a 
larger function value than ARNT when $M=1.5$  using retractions 
R2 and R3.
Overall, taking both numerical 
stability and time cost into consideration, ARNT shows the best performance.

\subsection{Application in spin-2 BEC}
\label{sect:spin2}

In this section, we apply the ARNT method with the projective retraction to compute the 
ground state of a spin-2 BEC in 1-3 dimensions and under different interactions. Specifically, 
the following cases are considered \cite{Gautam}: 
\begin{itemize}
\item 1D, $V(x) = \frac{1}{2} x^2+25\sin^2 (\frac{\pi x}{4}).$
	\begin{itemize}
	\item[~] Case \rn 1 (ferromagnetic). ~~$\beta_0 = 130.6$, $\beta_1 = -25.4, \beta_2 = -125.3$, 
	$U = [-8, 8], n = 2^8.$
	\item[~] Case \rn 2 (antiferromagnetic). ~$\beta_0 = 243$, $\beta_1 = 12.1, \beta_2 = -13$, 
	$U = [-16, 16], n = 2^9.$
	\item[~] Case \rn 3 (cyclic). $\beta_0 = 183.9$, $\beta_1 = 26.8, \beta_2 = 134.7$, 
	$U = [-16, 16], n = 2^9.$
	\end{itemize}
\item 2D, $V(x,y)=\frac{1}{2}(x^2+y^2)+10\left[\sin^2 (\frac{\pi x}{2})+
\sin^2 (\frac{\pi y}{2})\right]$, $U = [-8, 8]\times [-8, 8], n = 2^8.$
	\begin{itemize}
	\item[~] Case \rn 1 (ferromagnetic). ~~$\beta_0 = 130.6$, $\beta_1 = -25.4, \beta_2 = -125.3$.
	\item[~] Case \rn 2 (antiferromagnetic). ~$\beta_0 = 243$, $\beta_1 = 12.1, \beta_2 = -13$.
	\item[~] Case \rn 3 (cyclic). $\beta_0 = 183.9$, $\beta_1 = 26.8, \beta_2 = 134.7$.
	\end{itemize}
\item 3D, $V(x,y,z)=\frac{1}{2}(x^2+y^2+z^2)+100\left[\sin^2 (\frac{\pi x}{2})+
\sin^2 (\frac{\pi y}{2})+\sin^2(\frac{\pi z}{2})\right]$
	\begin{itemize}
	\item[~] Case \rn 1 (ferromagnetic). ~~$\beta_0 = 130.6$, $\beta_1 = -25.4, \beta_2 = -125.3$, 
	$U = [-8, 8]\times [-16, 16]\times [-16, 16], n = 2^7.$
	\item[~] Case \rn 2 (antiferromagnetic). ~$\beta_0 = 243$, $\beta_1 = 12.1, \beta_2 = -13$, 
	$U = [-16, 16]\times [-16, 16]\times [-16, 16], n = 2^8.$
	\item[~] Case \rn 3 (cyclic). $\beta_0 = 183.9$, $\beta_1 = 26.8, \beta_2 = 134.7$, 
	$U = [-16, 16]\times [-16, 16]\times [-16, 16], n = 2^8.$
	\end{itemize}
\end{itemize}

\begin{table}[htb] 
\centering 
\caption{Ground state energies of spin-2 BEC} 
\begin{tabular}{c|ccc|ccc|ccc} 
\hline 
\multirow{2}*{$M$} & \multicolumn{3}{c|}{1D} & \multicolumn{3}{c|}{2D} & \multicolumn{3}{c}{3D} \\ \cline{2-10}~ & Case \rn 1 & Case \rn 2 & Case \rn 3 & Case \rn 1 & Case \rn 2 & Case \rn 3 & Case \rn 1 & Case \rn 2 & Case \rn 3 \\ \hline 
0.0 & 10.3700 & 25.6185 & 24.1144 & 9.5754 & 14.3386 & 13.9158 & 39.0045 & 46.9770 & 46.2917 \\ 
0.5 & 10.3700 & 25.7372 & 22.9404 & 9.5754 & 14.3730 & 13.5746 & 39.0045 & 47.0301 & 45.7403 \\ 
1.5 & 10.3700 & 26.8415 & 25.4640 & 9.5754 & 14.6754 & 14.2734 & 39.0045 & 47.5117 & 46.8619 \\ 
\hline 
\end{tabular} \label{tb:3-1}
\end{table} 

%
%

The ground state energies in above cases are listed in Table \ref{tb:3-1}. 
Under ferromagnetic interaction (Case \rn 1) the energy remains 
constant when $M$ changes; under antiferromagnetic interaction (Case \rn 2), the energy gets higher 
as $M$ increases; under cyclic interaction (Case \rn 3), the energy decreases first and 
then goes up as $M$ increases.

\begin{figure} 
\centering 
\subfigure{\includegraphics[width=0.32\linewidth]{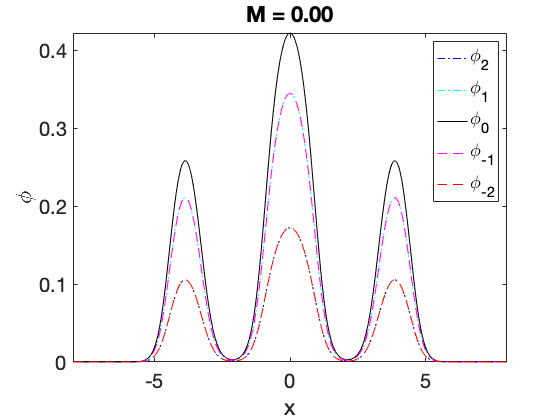}}
\subfigure{\includegraphics[width=0.32\linewidth]{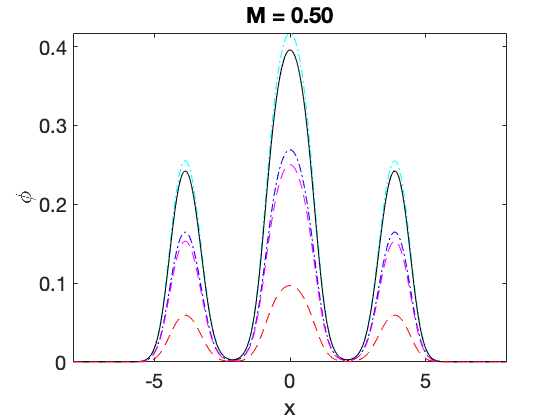}}
\subfigure{\includegraphics[width=0.32\linewidth]{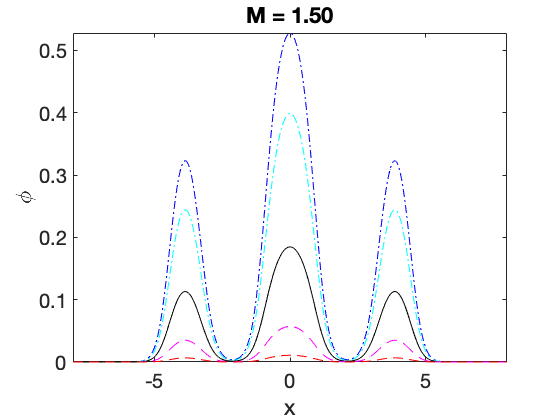}}
\caption{Wave functions of the ground state, i.e. $\phi_2(x)$ (blue dash-dot line), $\phi_1(x)$
(light blue dash-dot line), $\phi_0(x)$(black solid line), $\phi_{-1}(x)$ (purple dashed line) 
 and $\phi_{-2}(x)$ (red dashed line) of a spin-2 BEC for Case \rn 1 in  
1D under different magnetizations $M = 0, 0.5, 1.5$.} 
\label{fig:3-1}
\end{figure}

\begin{figure} 
\centering 
\subfigure{\includegraphics[width=0.32\linewidth]{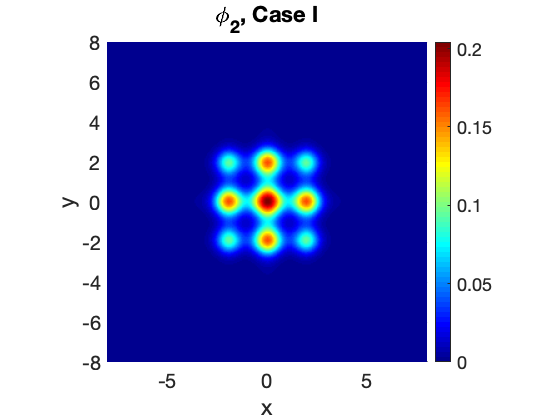}}
\subfigure{\includegraphics[width=0.32\linewidth]{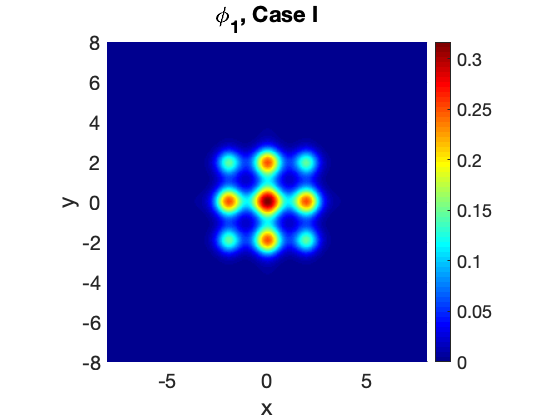}}
\subfigure{\includegraphics[width=0.32\linewidth]{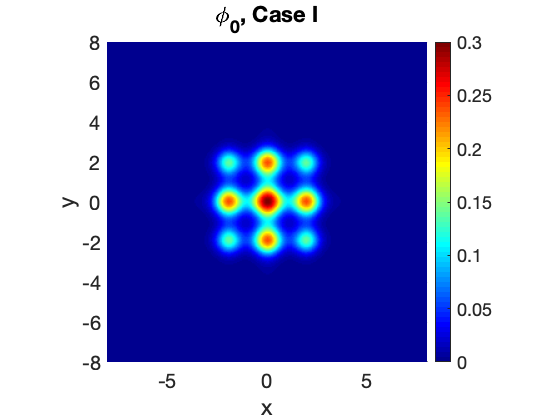}}
\vfill
\subfigure{\includegraphics[width=0.32\linewidth]{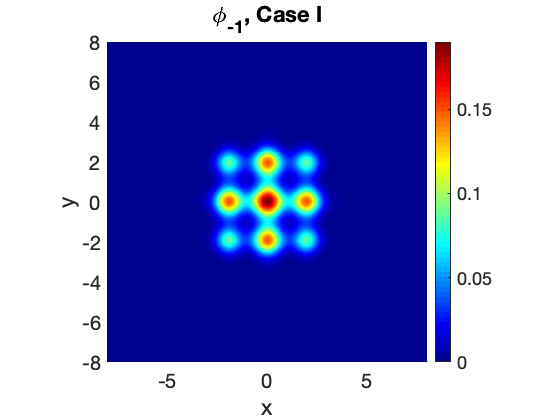}}
\subfigure{\includegraphics[width=0.32\linewidth]{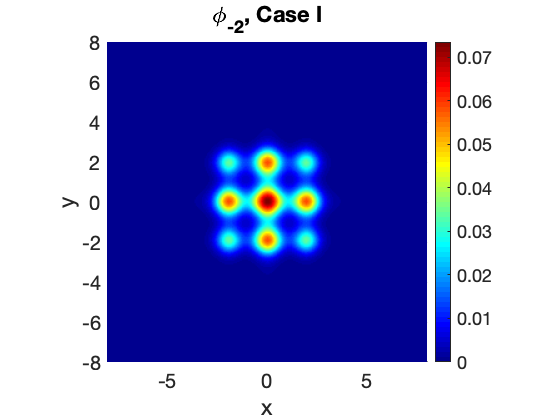}}
\subfigure{\includegraphics[width=0.32\linewidth]{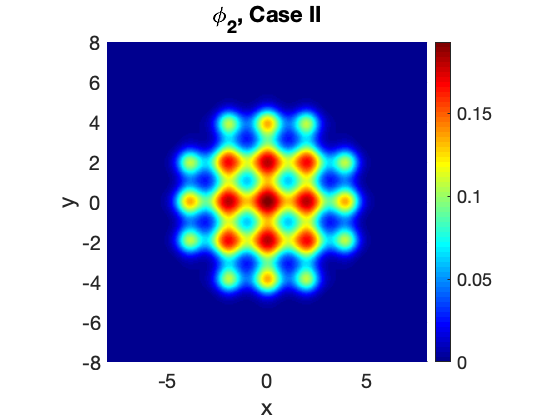}}
\vfill
\subfigure{\includegraphics[width=0.32\linewidth]{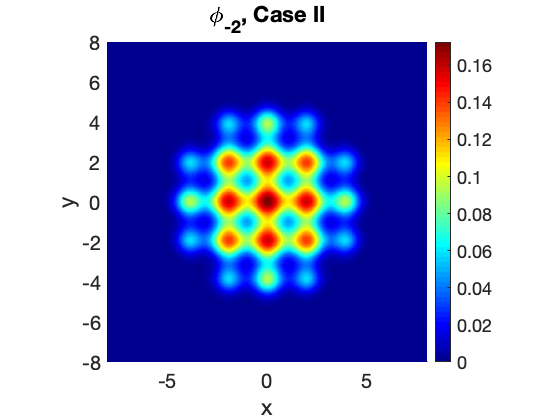}}
\subfigure{\includegraphics[width=0.32\linewidth]{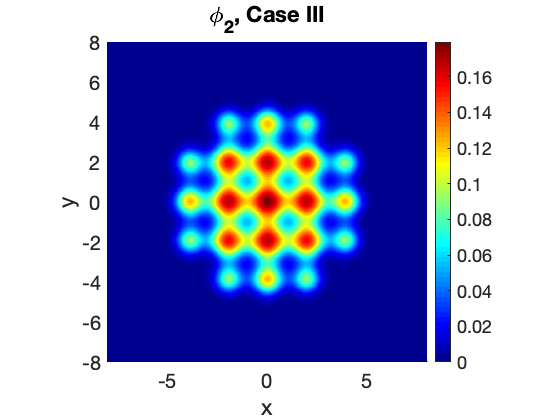}}
\subfigure{\includegraphics[width=0.32\linewidth]{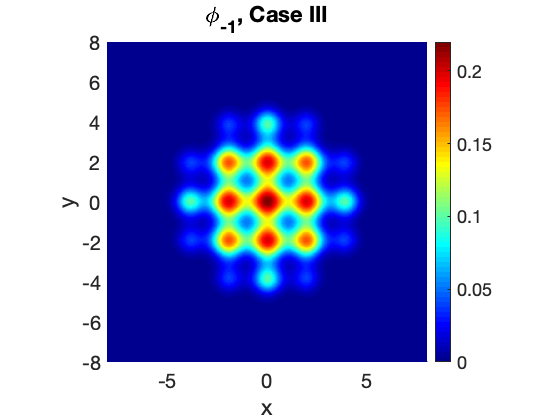}}
\caption{Contour plots for the wave functions of the ground state, i.e. $\phi_2(x,y)$, 
$\phi_1(x,y)$, $\phi_0(x,y)$, $\phi_{-1}(x,y)$, $\phi_{-2}(x,y)$ of a 
spin-2 BEC in 2D with $M = 0.5$ under different interactions. 
In Case \rn 2, the components $\phi_1(x,y)$, $\phi_0(x,y)$, $\phi_{-1}(x,y)$ are zero.
In Case \rn 3, the components $\phi_1(x,y)$, $\phi_0(x,y)$, $\phi_{-2}(x,y)$ are zero.}
\label{fig:3-2}
\end{figure}

\begin{figure} 
\centering 
\subfigure{\includegraphics[width=0.32\linewidth]{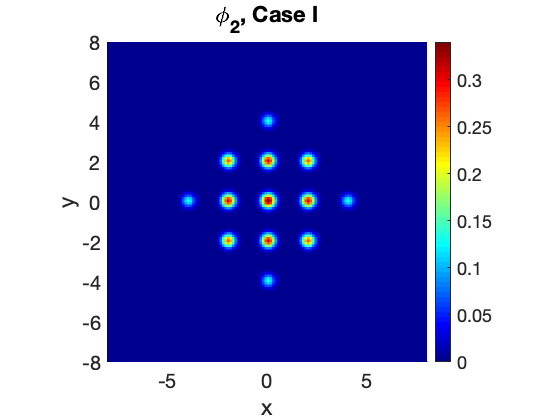}}
\subfigure{\includegraphics[width=0.32\linewidth]{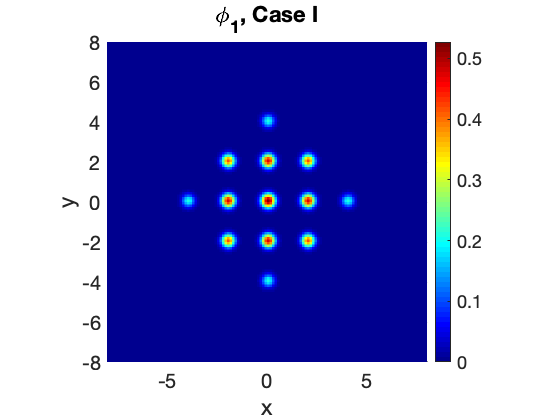}}
\subfigure{\includegraphics[width=0.32\linewidth]{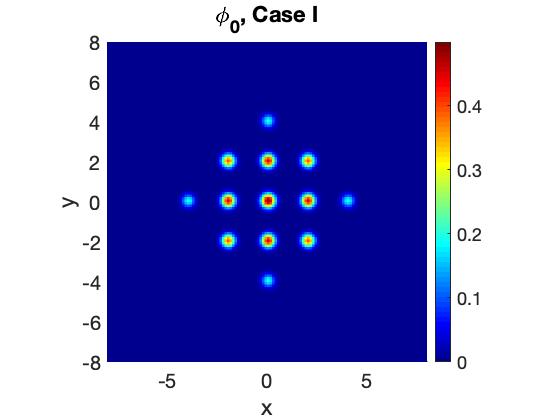}}
\vfill
\subfigure{\includegraphics[width=0.32\linewidth]{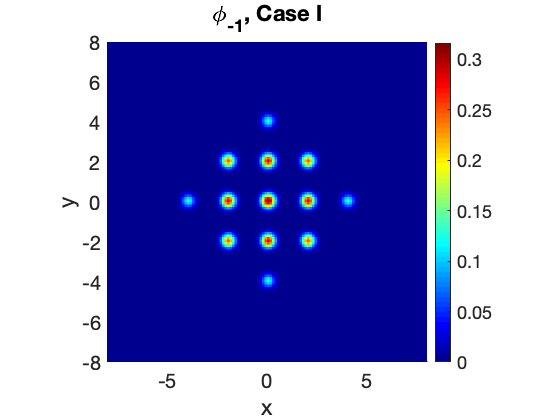}}
\subfigure{\includegraphics[width=0.32\linewidth]{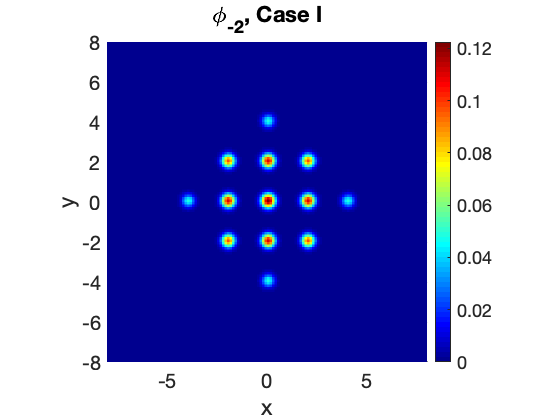}}
\subfigure{\includegraphics[width=0.32\linewidth]{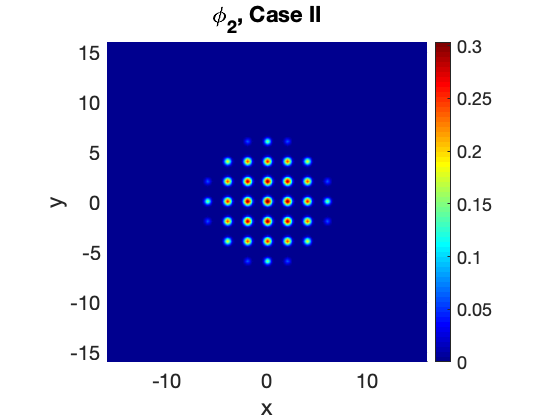}}
\vfill
\subfigure{\includegraphics[width=0.32\linewidth]{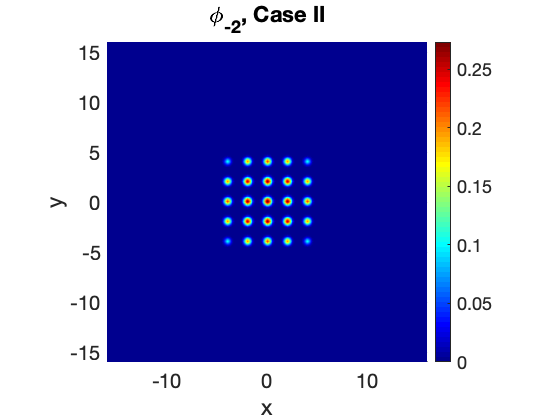}}
\subfigure{\includegraphics[width=0.32\linewidth]{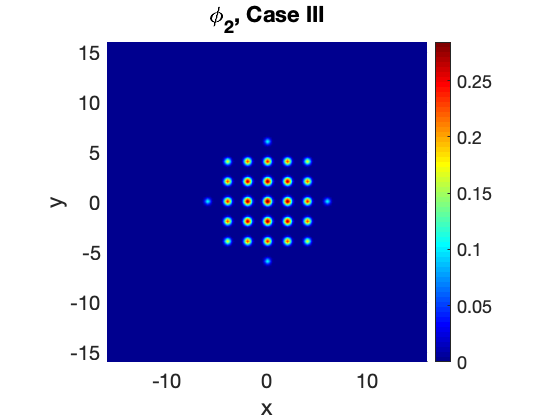}}
\subfigure{\includegraphics[width=0.32\linewidth]{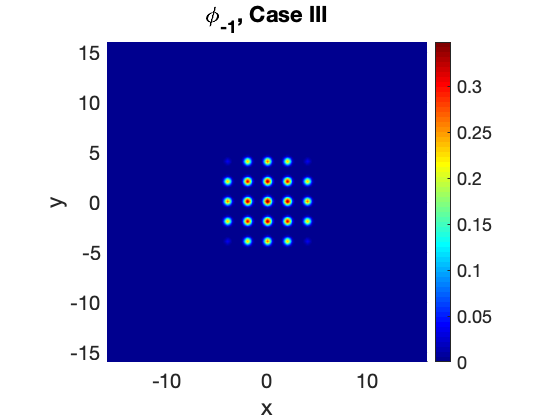}}
\caption{Contour plots for the wave functions of the ground state, i.e. $\phi_2(x,y,0)$, 
$\phi_1(x,y,0)$, $\phi_0(x,y,0)$, $\phi_{-1}(x,y,0)$, $\phi_{-2}(x,y,0)$ of a 
spin-2 BEC in 3D with $M = 0.5$ under different interactions. 
In Case \rn 2, the components $\phi_1(x,y,z)$, $\phi_0(x,y,z)$, $\phi_{-1}(x,y,z)$ 
are zero. In Case \rn 3, the components $\phi_1(x,y,z)$, $\phi_0(x,y,z)$, 
$\phi_{-2}(x,y,z)$ are zero.}
\label{fig:3-3}
\end{figure}

The wave functions of the ground states computed by ARNT are given in Figure \ref{fig:3-1}-
\ref{fig:3-3}. 
The peak function value under ferromagnetic interaction is lower than that under the other two types of interactions. By
comparing Figures \ref{fig:3-1}, \ref{fig:3-2} and \ref{fig:3-3}, we can find a common 
property: when $M>0$, in the ground states under nematic interaction, the components 
$\phi_1, \phi_0, \phi_{-1}$ are always zero-valued functions; and in the ground states 
under cyclic interaction, the components $\phi_1, \phi_0, \phi_{-2}$ are always 
zero-valued functions. 

\subsection{Application in spin-3 BEC}
\label{sect:spin3}

In this section, we apply the ARNT method with the projective retraction to compute the 
ground state of a spin-3 BEC in 1-3 dimensions  under different interactions. In detail, 
the following cases are considered: 
\begin{itemize}
\item 1D, $V(x) = \frac{1}{2} x^2+25\sin^2 (\frac{\pi x}{4})$, $U = [-8, 8], n = 2^8.$
	\begin{itemize}
	\item[~] Case \rn 1. ~~$\beta_0 = 100$, $\beta_1 = 1, \beta_2 = -10, \beta_3 = -1$.
	\item[~] Case \rn 2. ~~$\beta_0 = 100$, $\beta_1 = 1, \beta_2 = 10, \beta_3 = 1$.
	\end{itemize}
\item 2D, $V(x,y)=\frac{1}{2}(x^2+y^2)+10\left[\sin^2 (\frac{\pi x}{2})+
\sin^2 (\frac{\pi y}{2})\right]$, $U = [-8, 8]\times [-8, 8], n = 2^8.$
	\begin{itemize}
	\item[~] Case \rn 1. ~~$\beta_0 = 100$, $\beta_1 = 1, \beta_2 = -10, \beta_3 = -1$.
	\item[~] Case \rn 2. ~~$\beta_0 = 100$, $\beta_1 = 1, \beta_2 = 10, \beta_3 = 1$.
	\end{itemize}
\item 3D, $V(x,y,z)=\frac{1}{2}(x^2+y^2+z^2)+100\left[\sin^2 (\frac{\pi x}{2})+
\sin^2 (\frac{\pi y}{2})+\sin^2(\frac{\pi z}{2})\right]$, $U = [-8, 8]\times [-8, 8]\times 
[-8, 8], n = 2^7.$
	\begin{itemize}
	\item[~] Case \rn 1. ~~$\beta_0 = 100$, $\beta_1 = 1, \beta_2 = -10, \beta_3 = -1$.
	\item[~] Case \rn 2. ~~$\beta_0 = 100$, $\beta_1 = 1, \beta_2 = 10, \beta_3 = 1$.
	\end{itemize}
\end{itemize}

\begin{table}[htb] 
\centering 
\caption{Ground state energies of spin-3 BEC} 
\begin{tabular}{c|cc|cc|cc} 
\hline 
\multirow{2}*{$M$} & \multicolumn{2}{c|}{1D} & \multicolumn{2}{c|}{2D} & \multicolumn{2}{c}{3D} \\ \cline{2-7}~ & Case \rn 1 & Case \rn 2 & Case \rn 1 & Case \rn 2 & Case \rn 1 & Case \rn 2 \\ \hline 
0.0 & 17.1091 & 17.2527 & 11.7811 & 11.8279 & 42.9028 & 42.9752 \\ 
0.5 & 17.1289 & 17.2693 & 11.7877 & 11.8334 & 42.9115 & 42.9822 \\ 
1.5 & 17.2905 & 17.4034 & 11.8415 & 11.8780 & 42.9825 & 43.0399 \\ 
\hline 
\end{tabular} \label{tb:4-1}
\end{table}

\begin{figure} 
\centering 
\subfigure{\includegraphics[width=0.32\linewidth]{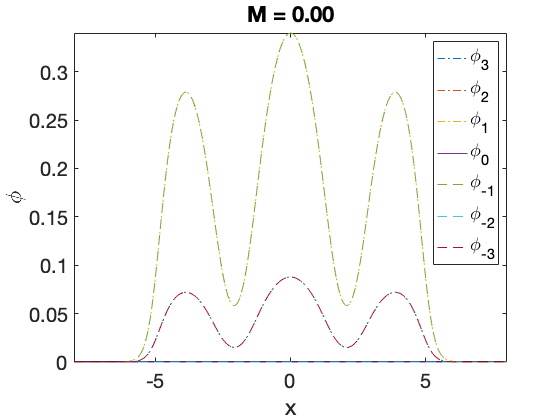}}
\subfigure{\includegraphics[width=0.32\linewidth]{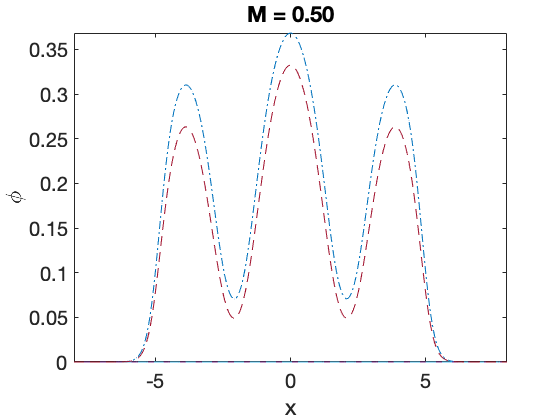}}
\subfigure{\includegraphics[width=0.32\linewidth]{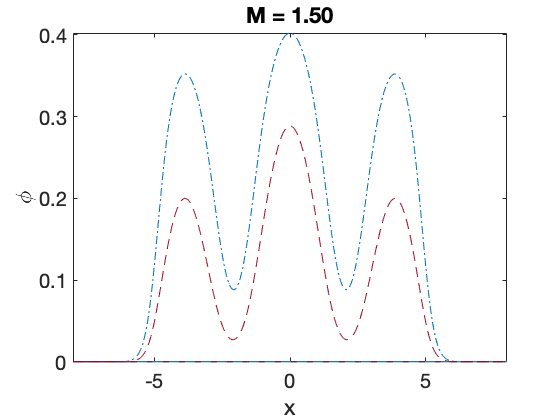}}
\caption{Wave functions of the ground state, i.e. $\phi_3(x)$, $\phi_2(x)$, $\phi_1(x)$,
$\phi_0(x)$, $\phi_{-1}(x)$, $\phi_{-2}(x)$ and $\phi_{-3}(x)$ of a spin-3 BEC for Case \rn 1 
in 1D under different magnetizations $M = 0, 0.5, 1.5$.} 
\label{fig:4-1}
\end{figure}

\begin{figure} 
\centering 
\subfigure{\includegraphics[width=0.32\linewidth]{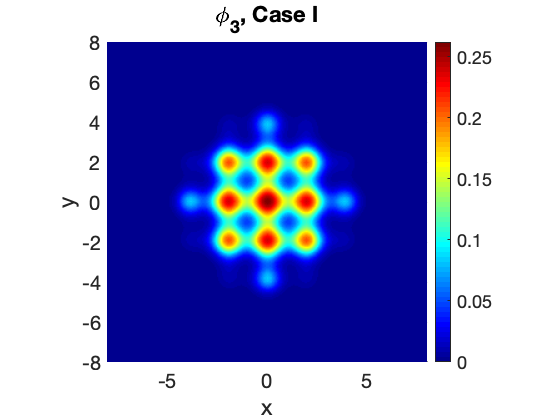}}
\subfigure{\includegraphics[width=0.32\linewidth]{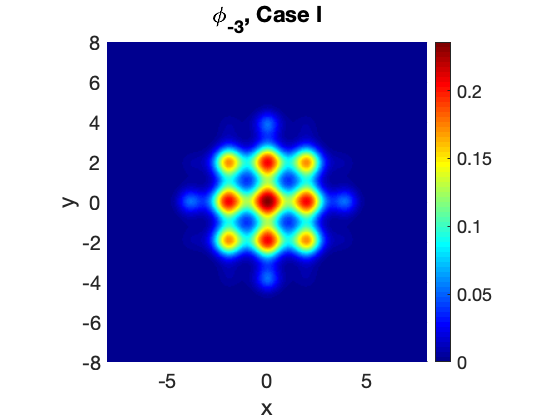}}
\vfill
\subfigure{\includegraphics[width=0.32\linewidth]{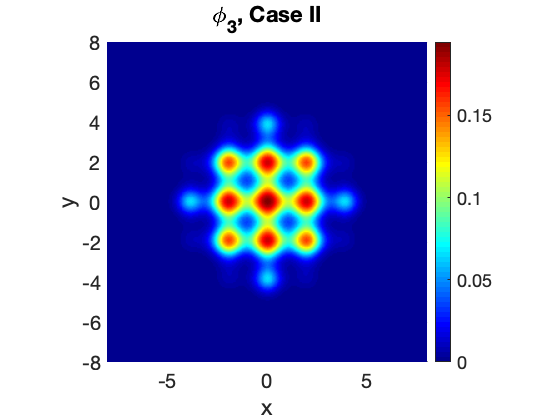}}
\subfigure{\includegraphics[width=0.32\linewidth]{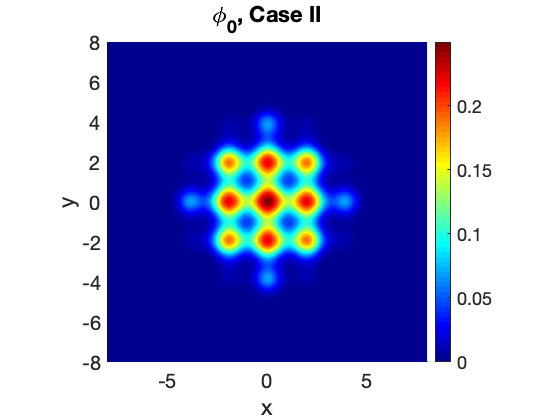}}
\subfigure{\includegraphics[width=0.32\linewidth]{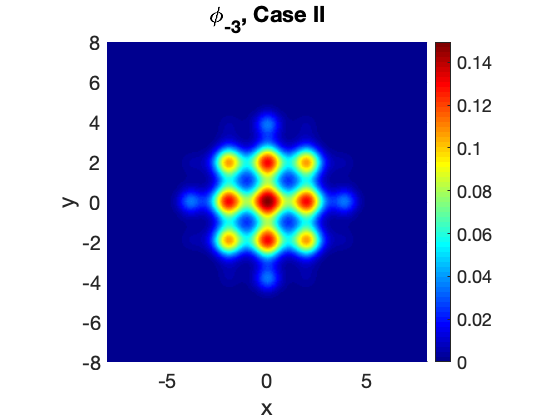}}
\caption{Contour plots for the wave functions of the ground state, i.e. $\phi_3(x,y)$, 
$\phi_2(x,y)$, $\phi_1(x,y)$, $\phi_0(x,y)$, $\phi_{-1}(x,y)$, $\phi_{-2}(x,y)$, 
$\phi_{-3}(x,y)$ of a spin-3 BEC in 2D with $M = 0.5$ under different interactions. 
In Case \rn 1, the components $\phi_2(x,y)$, $\phi_1(x,y)$, $\phi_0(x,y)$, 
$\phi_{-1}(x,y)$, $\phi_{-2}(x,y)$ are close to zero; In Case \rn 2, the components $\phi_2(x,y)$, 
$\phi_1(x,y)$, $\phi_{-1}(x,y)$, $\phi_{-2}(x,y)$ are close to zero.}
\label{fig:4-2}
\end{figure}

\begin{figure} 
\centering 
\subfigure{\includegraphics[width=0.32\linewidth]{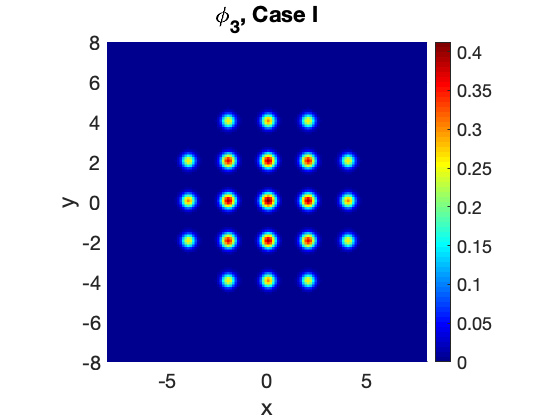}}
\subfigure{\includegraphics[width=0.32\linewidth]{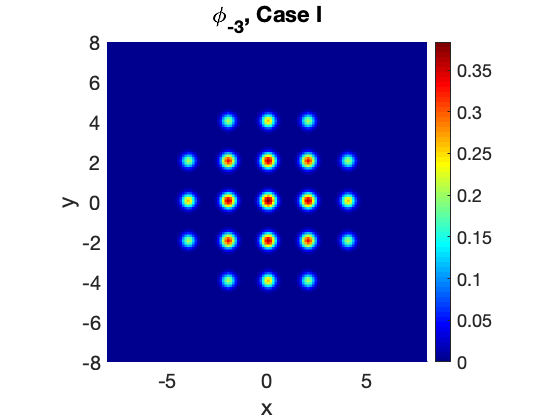}}
\vfill
\subfigure{\includegraphics[width=0.32\linewidth]{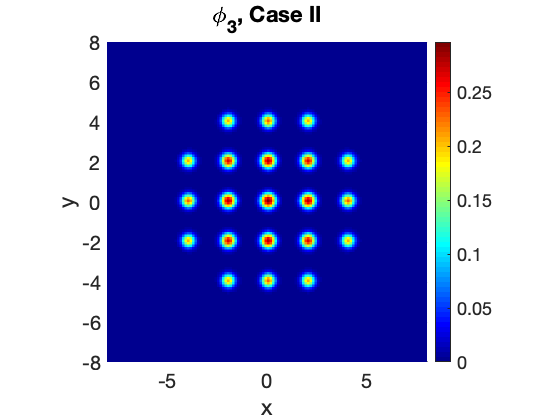}}
\subfigure{\includegraphics[width=0.32\linewidth]{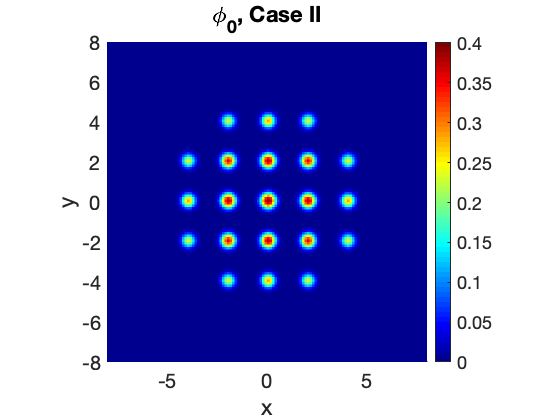}}
\subfigure{\includegraphics[width=0.32\linewidth]{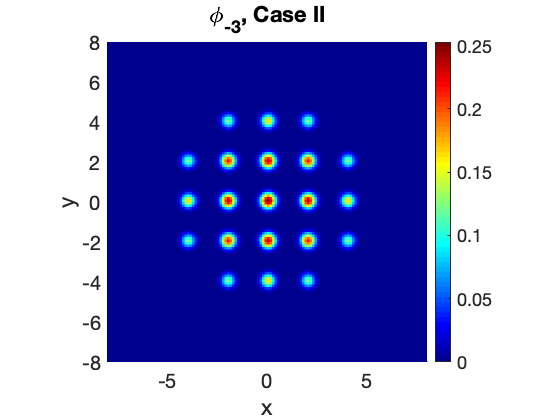}}
\caption{Contour plots for the wave functions of the ground state, i.e. $\phi_3(x,y,0)$, 
$\phi_2(x,y,0)$, $\phi_1(x,y,0)$, $\phi_0(x,y,0)$, $\phi_{-1}(x,y,0)$, $\phi_{-2}(x,y,0)$, 
$\phi_{-3}(x,y,0)$ of a spin-3 BEC in 3D with $M = 0.5$ under different interactions. 
In Case \rn 1, the components $\phi_2(x,y,z)$, $\phi_1(x,y,z)$, $\phi_0(x,y,z)$, 
$\phi_{-1}(x,y,z)$, $\phi_{-2}(x,y,z)$ are close to zero, In Case \rn 2, the components 
$\phi_2(x,y,z)$, $\phi_1(x,y,z)$, $\phi_{-1}(x,y,z)$, $\phi_{-2}(x,y,z)$ 
are close to zero.}
\label{fig:4-3}
\end{figure}

The ground state energies in above cases are listed in Table \ref{tb:4-1}. 
In each case, the energy increases as $M$ increases.
The wave functions of the ground states computed by ARNT are given in 
Figures \ref{fig:4-1}-\ref{fig:4-3}. 
By comparing the figures, we can see that in Case \rn 1, when $M>0$, the components 
$\phi_2, \phi_1, \phi_0, \phi_{-1}, \phi_{-2}$ are always close to zero; in Case \rn 2, 
the components $\phi_2, \phi_1, \phi_{-1}, \phi_{-2}$ are always close to zero
($\infty$-norm less than $10^{-6}$).

\section{Conclusions}
\label{sect:conclusion}

The Fourier pseudospectral method was adopted to discretize the energy functional and constraints for computing the ground states of spin-$F$ Bose-Einstein condensate (BEC). 
The original variational problem was reduced to a finite dimensional Riemannian 
optimization problem. 
An adaptive regularized Newton method, combined with a Riemannian gradient method 
and a cascadic multigrid technique, was designed to solve the discretized problem.
Three different retractions were proposed to implement the optimization algorithms on the manifold. 
Comparison with the Riemannian gradient method and trust-region method for
different retractions and parameters showed that our approach is more efficient and stable. 
Extensive numerical examples of spin-2 and spin-3 BEC in 1D, 2D and 3D with 
optical lattice potential and various interaction demonstrated the robustness 
of our approach. The energy and wave functions of ground states are reported to reveal some interesting physical phenomena. 
Our method is the first one to explore spin-3 BEC computationally. Although 
the spin-3 cases discussed in this paper are relatively simple, our algorithm 
is also applicable for cases with more diverse parameters.

\bibliographystyle{siam}
\bibliography{BEC}

\end{document}